\documentclass[12pt]{amsart}
\usepackage{amsfonts,amssymb,amscd,amsmath,enumerate,url,verbatim}
\usepackage[all]{xy}

\theoremstyle{plain}

\newtheorem{theorem}{Theorem}[section]

\newtheorem{prop}[theorem]{Proposition}

\newtheorem{corollar}[theorem]{Corollary}

\newtheorem{obs}[theorem]{Observation}
\theoremstyle{definition}

\newtheorem{construction}[theorem]{Construction}
\newtheorem{definition}[theorem]{Definition}
\newtheorem{example}[theorem]{Example}

\usepackage{tkz-graph}
\usetikzlibrary{arrows}

\newcommand{\m}{\mathfrak{m}}

\begin{document}
\title[On the existence of non-free totally reflexive modules]
{On the existence of non-free totally reflexive modules}

\author{J. Cameron Atkins and Adela Vraciu}
\address{Cameron Atkins\\Department of Mathematics\\University of South Carolina\\\linebreak
Columbia\\SC 29208\\ U.S.A}\email{atkinsj6@email.sc.edu}
\address{Adela Vraciu\\Department of Mathematics\\University of South Carolina\\\linebreak 
Columbia\\ SC 29208\\ U.S.A.} \email{vraciu@math.sc.edu}

\subjclass[2010]{13D02}
\keywords{totally reflexive modules, Stanley-Reisner rings}

\thanks{Research partly supported by NSF grant  DMS-1200085}
\maketitle
\begin{abstract}
For a standard graded Cohen-Macaulay ring $R$, if the quotient $R/(\underline{x})$ admits non-free totally reflexive modules, where $\underline{x}$ is a system of parameters consisting of elements of degree one, then so does the ring $R$. A non-constructive proof of this statement was given in \cite{Ta}. We give an explicit construction of the totally reflexive modules over $R$ obtained from those over $R/(\underline{x})$.  

We consider the question of which Stanley-Reisner rings of graphs admit non-free totally reflexive modules and discuss some examples.

For an Artinian local ring $(R, \m)$ with $\m^3 =0$ and containing the complex numbers, we describe an explicit construction of uncountably many non-isomorphic indecomposable totally reflexive modules, under the assumption that at least one such non-free module exists.
\end{abstract}

\section{Introduction}

Totally reflexive modules were introduced by Auslander and Bridger in \cite{AB}, under the name of modules of Gorenstein dimensin zero. These modules were used as a generalization of free modules, in order to define a new homological dimension for finitely generated modules over Noetherian rings, called the G-dimension. Over a Gorenstein ring, the totally reflexive modules are exactly the maximal Cohen-Macaulay modules, and Gorenstein rings are characterized by the fact that every finitely generated module has finite G-dimension.

In \cite{CPST} it was shown that one can use totally reflexive modules to give a characterization of simple hypersurface singularities among all complete local algebras. It was also shown in \cite{CPST} that if a local ring is not Gorenstein, then it either has infinitely many indecomposable pairwise non-isomorphic totally reflexive modules, or else it has none other than the free modules. This dichotomy points out that it is important to understand which non-Gorenstein rings admit non-free totally reflexive modules and which do not. This question is not well understood at present. In this paper we study this issue from the poit of view of reducing to the case of Artinian rings, and we use this technique to study a class of rings obtained from a graph.

In this paper, $R$ and $S$ will denote commutative Noetherian rings.

\begin{definition}
A finitely generated module $M$ is called {\em totally reflexive} if there exists an infinite complex of finitely generated free $R$-modules
$$
F: \ \ \ \ \cdots \rightarrow F_1 \rightarrow F_0 \rightarrow F_{-1} \rightarrow \cdots 
$$
such that $M$ is isomorphic to $\mathrm{Coker}(F_1 \rightarrow F_0)$, and such that both the complex $F$ and the dual $F^*=\mathrm{Hom}_R(F, R)$ are exact. 

Such a complex $F$ is called a {\em totally acyclic complex}. We say that $F$ is a {\em minimal totally acyclic complex} if the entries of the matrices representing the differentails are in the maximal ideal (or homogeneous maximal ideal in the case of a graded ring).
\end{definition}

It is obvious that free modules are totally reflexive. The next easiest example is provided by exact zero divisors, studied under this name in \cite{HS}:
\begin{definition}
A pair of elements $a, b \in R$ is called a {\em pair of exact zero divisors} if $\mathrm{Ann}_R(a)=(b)$ and $\mathrm{Ann}_R(b)=a$.

Note that if $R$ is an Artinian ring, then one of these conditions implies the other (as it implies that $l((a)) +l((b))=l(R)$).
\end{definition}
If $a, b$ is a pair of exact zero divisors, then the complex
$$
\cdots R \stackrel{a}{\rightarrow} R \stackrel{b}{\rightarrow} R \stackrel{a}{\rightarrow} \cdots
$$
is a totally acyclic complex, and $R/(a)$, $R/(b)$ are totally reflexive modules.

More complex totally reflexive modules can be constructed using a pair of exact zero divisor, see \cite{Ho} and \cite{CJRSW}, \cite{CGT}. 

Many properties of commutative Noetherian rings can be reduced to the case of Artinian rings, via specialization. We use this approach in order to study the existence of non-free totally reflexive modules. The following  is well-known (see Proposition 1.5 in \cite{CFH}).
\begin{obs}\label{first_obs}
Let $R$ be a Cohen-Macaulay ring and let $M$ be a non-free totally reflexive $R$-module. If $\underline{x}$ is a system of parameters in $R$, then $M/(\underline{x})M$ is a non-free totally reflexive $R/(\underline{x})$-module.
\end{obs}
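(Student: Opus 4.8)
The plan is to resolve $M$ by a totally acyclic complex $F\colon \cdots\to F_1\to F_0\to F_{-1}\to\cdots$ with $M=\mathrm{Coker}(F_1\to F_0)$, to show that $\bar F:=F\otimes_R R/(\underline{x})$ is again a totally acyclic complex, now of free modules over $\bar R:=R/(\underline{x})$, with $\mathrm{Coker}(\bar F_1\to\bar F_0)=M/(\underline{x})M$, and finally to check that $M/(\underline{x})M$ is not free over $\bar R$. Since $R$ is Cohen-Macaulay, $\underline{x}$ is an $R$-regular sequence. Moreover, by the Auslander-Bridger equality a totally reflexive module over a Cohen-Macaulay local ring is maximal Cohen-Macaulay; the same holds for each kernel $Z_i:=\ker(F_i\to F_{i-1})$ of $F$, since $Z_i\cong\mathrm{Coker}(F_{i+2}\to F_{i+1})$ and is therefore itself totally reflexive. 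Hence $\underline{x}$ is a regular sequence on $M$ and on every $Z_i$.

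For the total acyclicity of $\bar F$, I would break $F$ into short exact sequences $0\to Z_i\to F_i\to Z_{i-1}\to 0$. Because $\underline{x}$ is $Z_{i-1}$-regular, $\mathrm{Tor}^R_1(Z_{i-1},\bar R)=H_1(\underline{x};Z_{i-1})=0$, so each of these sequences remains exact after $-\otimes_R\bar R$; splicing the resulting short exact sequences $0\to Z_i/(\underline{x})Z_i\to\bar F_i\to Z_{i-1}/(\underline{x})Z_{i-1}\to 0$ back together shows that $\bar F$ is exact, with cokernel $M/(\underline{x})M$ at spot $0$. Applying the same reasoning to $F^*=\mathrm{Hom}_R(F,R)$, which is itself totally acyclic and so also has totally reflexive (hence maximal Cohen-Macaulay) kernels, shows that $F^*\otimes_R\bar R$ is exact; and since each $F_i$ is free, $F^*\otimes_R\bar R\cong\mathrm{Hom}_{\bar R}(\bar F,\bar R)=\bar F^*$. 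Thus $\bar F$ is a totally acyclic complex of free $\bar R$-modules and $M/(\underline{x})M$ is totally reflexive over $\bar R$. (One could instead induct on the length of $\underline{x}$, reducing one parameter at a time: for a single $R$-nonzerodivisor $x$ the exactness of $F/xF$ and of $(F/xF)^*$ is immediate from the long exact homology sequence of $0\to F\xrightarrow{\,x\,}F\to F/xF\to 0$, after which $R/(x_1)$ is again Cohen-Macaulay with $\bar x_2,\dots,\bar x_n$ a system of parameters.)

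It remains to see that $M/(\underline{x})M$ is not free over $\bar R$, and this is the only step that genuinely uses the hypothesis on $M$ rather than just on $R$. Suppose it were free, of rank $r$. Lift a basis of $M/(\underline{x})M$ to a minimal generating set of $M$ (Nakayama), obtaining a short exact sequence $0\to K\to R^r\to M\to 0$. Tensoring with $\bar R$ and using $\mathrm{Tor}^R_1(M,\bar R)=H_1(\underline{x};M)=0$ (valid since $\underline{x}$ is $M$-regular) yields an exact sequence $0\to K/(\underline{x})K\to\bar R^r\to M/(\underline{x})M\to 0$ whose right-hand map is a surjection of free $\bar R$-modules of the same rank, hence an isomorphism. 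Therefore $K/(\underline{x})K=0$, so $K=0$ by Nakayama (note $\underline{x}\subseteq\mathfrak{m}$), and $M\cong R^r$ is free --- a contradiction. In the graded case one argues identically with homogeneous lifts. I expect the only real obstacle here is bookkeeping: keeping track of the dual complex, and invoking the standard input that totally reflexive modules over a Cohen-Macaulay ring are maximal Cohen-Macaulay. Granting that, the remaining ingredients --- tensoring short exact sequences with $\bar R$, vanishing of Koszul homology along a regular sequence, and Nakayama --- are all formal.
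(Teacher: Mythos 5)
The paper does not actually supply a proof of this observation; it cites it as well known from Christensen--Foxby--Holm, so there is no ``paper's own proof'' to compare against. Your argument is correct and is essentially the standard one underlying that reference. The two key ingredients you use are exactly the right ones: (i) a totally reflexive module over a Cohen--Macaulay local (or graded local) ring is maximal Cohen--Macaulay, and the same is true of every kernel $Z_i$ in a totally acyclic complex, since each $Z_i$ is itself a cokernel in the (shifted) complex; this yields $\mathrm{Tor}_1^R(Z_i, R/(\underline{x}))=0$ for all $i$ and hence exactness of $F\otimes_R R/(\underline{x})$ by splicing the reduced short exact sequences, and likewise for $F^*\otimes_R R/(\underline{x})\cong(F\otimes_R R/(\underline{x}))^*$; (ii) the non-freeness step, where you note that $\mathrm{Tor}_1^R(M,R/(\underline{x}))=0$ forces a minimal presentation $0\to K\to R^r\to M\to 0$ to stay exact, so that $M/(\underline{x})M$ free of rank $r=\mu(M)$ would give $K/(\underline{x})K=0$ and hence $K=0$ by Nakayama. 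Your parenthetical alternative (induction on a single nonzerodivisor $x$, using the long exact sequence of $0\to F\xrightarrow{x}F\to F/xF\to 0$) also works once one knows $x$ is regular on every $Z_i$. The only implicit hypotheses to flag are that $R$ is local or standard graded and $M$ is finitely generated, which is the paper's standing convention and is needed both for the Auslander--Bridger formula and for Nakayama.
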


On the other hand, Avramov, Gasharov and Peeva in \cite{AGP} give a construction of a module of complete intersection dimension zero (which implies totally reflexive)  over  any ring which is an embedded deformation. In \cite{AY}, infinitely many distinct isomorphism classes of such modules over a ring which is an embedded deformation are constructed. Thus we have:
\begin{theorem}\label{second_obs}\cite{AGP}, \cite{AY}
Let $(R, \m)$ be a Cohen-Macaulay module, and let $\underline{x} \subseteq \m^2$ be a $R$- regular sequence. The the quotient $R/(\underline{x})$ admits non-free totally reflexive modules.
 \end{theorem}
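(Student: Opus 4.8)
The plan is to reduce to the case of a single element and then produce the module explicitly as the cokernel of a matrix factorization over $R$.

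\emph{Reduction to one element.} Writing $\underline x = x_1,\dots,x_c$, first pass to $R' := R/(x_1,\dots,x_{c-1})$, a Cohen--Macaulay local ring with maximal ideal $\m' = \m R'$, so $\m'^2 = \m^2 R'$. The image $x'$ of $x_c$ is then a nonzerodivisor on $R'$ lying in $\m'^2$, and $R/(\underline x) = R'/(x')$. Hence it suffices to find a non-free totally reflexive module over $S := R/(x)$ whenever $x \in \m^2$ is a nonzerodivisor on $R$.

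\emph{The matrix factorization.} Since $0 \ne x \in \m\cdot\m$, I would write $x$ in a minimal generating set $g_1,\dots,g_d$ of $\m$ and group the terms as $x = \sum_i g_i h_i$ with $h_i \in \m$; discarding the zero summands and relabeling so that $h_1 \ne 0$, this gives $x = \sum_{i=1}^m a_ib_i$ with $a_i,b_i \in \m$ and $a_1 = g_1 \notin \m^2$. From such an expression one builds, by iterating the block recursion
\[
A = \begin{pmatrix} a_1 I & A' \\ -B' & b_1 I\end{pmatrix}, \qquad B = \begin{pmatrix} b_1 I & -A' \\ B' & a_1 I\end{pmatrix}
\]
(where $(A',B')$ is the pair attached to $\sum_{i\ge 2}a_ib_i$, the base case being $(a_1),(b_1)$), square matrices over $R$ with $AB = BA = xI_n$, all of whose entries lie in $\m$ and one of which equals $a_1$. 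Because $x$ is a nonzerodivisor, $A$ and $B$ are injective over $R$, and the standard matrix-factorization argument — which uses only that $x$ is a nonzerodivisor on $R$, not regularity of $R$ — shows that the $2$-periodic complex
\[
\mathbf F:\qquad \cdots \lar S^n \xrightarrow{\,\overline B\,} S^n \xrightarrow{\,\overline A\,} S^n \xrightarrow{\,\overline B\,} \cdots
\]
and its $S$-dual (the complex of the transposed matrix factorization $(B^{\mathrm t},A^{\mathrm t})$) are both exact. Therefore $M := \operatorname{Coker}(\overline A\colon S^n \to S^n)$ is a totally reflexive $S$-module.

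\emph{Non-freeness, the crux.} The entries of $\overline A$ lie in $\m S$, so $M$ is minimally generated by $n$ elements; if $M$ were free it would be $\cong S^n$, and then the surjection $S^n \twoheadrightarrow M \cong S^n$ — a surjective endomorphism of a finitely generated module, hence an isomorphism — would force $\overline A = 0$, i.e. every entry of $A$ would lie in $(x) \subseteq \m^2$, contradicting $a_1 \notin \m^2$. So $M$ is non-free. I expect this last step to be the only real obstacle, and it is exactly where the hypothesis $\underline x \subseteq \m^2$ is used: it is what lets a genuine minimal generator of $\m$ appear as an entry of the matrix factorization, an entry that $(x) \subseteq \m^2$ cannot absorb. (When $x$ happens to factor as a product $a_1 b_1$ of two elements of $\m$, the construction simply recovers the pair of exact zero divisors $\overline{a_1},\overline{b_1}$ on $S$ discussed above; alternatively, one may cite directly the construction of \cite{AGP} of a module of complete intersection dimension zero over the embedded deformation $S$, which is in particular totally reflexive and of infinite projective dimension, hence non-free.)
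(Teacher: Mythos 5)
The paper does not actually prove this theorem; it simply cites \cite{AGP}, which constructs a module of complete intersection dimension zero over any embedded deformation. Your proposal supplies a self-contained proof, and it is correct. The reduction to a single nonzerodivisor $x\in\m^2$ is clean: $R'=R/(x_1,\dots,x_{c-1})$ is local with $\m'^2=\m^2R'$, and the image of $x_c$ remains a nonzerodivisor in $\m'^2$. The iterated $2\times 2$ block recursion is exactly the tensor product (Clifford-algebra) construction of a matrix factorization of $x=\sum a_ib_i$; it yields square matrices $A,B$ over $R$ with $AB=BA=xI$, all entries in $\m$, and with $a_1$ as a literal entry. Exactness of the $2$-periodic complex over $S=R/(x)$ and of its dual follows precisely as you say, using only that $x$ is a nonzerodivisor: from $Av=xw$ one gets $xv=BAv=xBw$, hence $v=Bw$; and $(B^{\mathrm t},A^{\mathrm t})$ is again a matrix factorization, giving exactness of the dual. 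Your non-freeness argument is the right one and is where $\underline x\subseteq\m^2$ enters: if $M=\mathrm{Coker}(\overline A)$ were free, the minimal presentation would force $\overline A=0$, putting every entry of $A$ in $(x)\subseteq\m^2$, contradicting $a_1\notin\m^2$. The only point worth flagging is that you should require $x\ne 0$ to ensure at least one $h_i\ne 0$ in $x=\sum g_ih_i$; this is automatic since a regular element is nonzero. Overall your argument is more explicit and elementary than citing \cite{AGP}, and in fact never uses the Cohen--Macaulay hypothesis, only the regular-sequence hypothesis — a mild strengthening of the stated result.
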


Given a Cohen-Macaulay standard graded or local  ring $(R, \m)$, one would like to investigate whether $R$ admits non-free totally reflexive modules via investigating the same issue for specializations $R/(\underline{x})$. In order to use this approach, one  needs a converse of Observation~\ref{first_obs}. In  light of Theorem ~\ref{second_obs},  such a converse cannot be true if the system of parameters $\underline{x}$ is contained in $\m ^2$ (as in this case $R/(\underline{x})$ always has non-free totally reflexive modules, even if $R$ does not). This converse is proved in~(\cite{Ta},  Proposition4.6):
\begin{theorem}\cite{Ta}\label{main_theorem}
Let $(S, \mathfrak{m})$ be a local ring, and let $x_1, \ldots, x_d$ be a regular sequence such that $x_i \in \mathfrak{m} \, \backslash \, \mathfrak{m}^2$. Let $R=S/(x_1, \ldots, x_d)$. If $R$ has non-free totally reflexive modules, then so does $S$.
\end{theorem}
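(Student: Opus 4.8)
The plan is to reduce to the case $d=1$ by induction on $d$---replacing $S$ by $S/(x_1,\dots,x_{d-1})$, in whose maximal ideal the image of $x_d$ still lies outside the square, provided the $x_i$ are linearly independent modulo $\mathfrak{m}^2$ (automatic when, as in the intended application, the $x_i$ are linear forms of a graded ring)---and then to lift a complete resolution from $R=S/(x)$ up to $S$. So write $x=x_1\in\mathfrak{m}\setminus\mathfrak{m}^2$, a nonzerodivisor, let $M$ be a non-free totally reflexive $R$-module, and let $(F,\partial)$ be a \emph{minimal} totally acyclic complex of finitely generated free $R$-modules with $M\cong\mathrm{Coker}(F_1\to F_0)$; minimality and non-freeness of $M$ force every $F_i\neq 0$.

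First I would lift the data to $S$: choose free $S$-modules $G_i$ reducing to $F_i$, and maps $\widetilde{\partial}_i\colon G_i\to G_{i-1}$ with entries in $\mathfrak{m}$ reducing to $\partial_i$. Because $\partial_i\partial_{i+1}=0$ and $x$ is a nonzerodivisor, there are unique $\sigma_i\colon G_{i+1}\to G_{i-1}$ with $\widetilde{\partial}_i\widetilde{\partial}_{i+1}=x\,\sigma_i$; bracketing $\widetilde{\partial}_{i-1}\widetilde{\partial}_i\widetilde{\partial}_{i+1}$ in two ways and cancelling $x$ yields $\sigma_{i-1}\widetilde{\partial}_{i+1}=\widetilde{\partial}_{i-1}\sigma_i$. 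With these two families of identities I would form the bi-infinite graded free $S$-module $P$ with $P_i=G_i\oplus G_{i-1}$ and differential
\[
\partial'_i=\begin{pmatrix}\widetilde{\partial}_i & \pm x\\ \pm\sigma_{i-1} & \pm\widetilde{\partial}_{i-1}\end{pmatrix},
\]
with signs chosen in the standard way; expanding the four blocks of $\partial'_i\partial'_{i+1}$ and using $\widetilde{\partial}\widetilde{\partial}=x\sigma$ together with $\sigma_{i-1}\widetilde{\partial}_{i+1}=\widetilde{\partial}_{i-1}\sigma_i$ shows $\partial'_i\partial'_{i+1}=0$. (This is the ``$S$-side'' of the change-of-rings/Shamash construction, run here bi-infinitely in both directions.)

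Next I would verify that $(P,\partial')$ is totally acyclic over $S$. Reducing modulo $x$: the reductions of the ``$G_{i-1}$''-summands form a subcomplex of $P\otimes_S R$ (the $\pm x$ entries vanish mod $x$), isomorphic to $F$ up to shift, with quotient the reductions of the ``$G_i$''-summands (the $\sigma$ entries vanish in the quotient), isomorphic to $(F,\partial)$; since $F$ is exact, so is $P\otimes_S R$. The long exact homology sequence of $0\to P\xrightarrow{x}P\to P\otimes_S R\to 0$ then shows $x$ acts surjectively on each $\mathrm{H}_n(P)$, which is finitely generated, so $\mathrm{H}_n(P)=0$ by Nakayama and $P$ is exact. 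For the dual, $\mathrm{Hom}_S(P,S)$ is again a complex of finitely generated free $S$-modules with $\mathrm{Hom}_S(P,S)\otimes_S R\cong\mathrm{Hom}_R(P\otimes_S R,R)$; dualizing the degreewise split short exact sequence of free complexes above and using that $\mathrm{Hom}_R(F,R)$ \emph{and} its shift are exact---this is precisely where total acyclicity of $(F,\partial)$, and not merely its acyclicity, is used---shows $\mathrm{Hom}_R(P\otimes_S R,R)$ is exact, hence so is $\mathrm{Hom}_S(P,S)$ by the same Nakayama argument. Thus every cokernel of $\partial'$ is a totally reflexive $S$-module.

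Finally I would see that these cokernels are non-free, and here the hypothesis $x\notin\mathfrak{m}^2$ enters essentially---it must, since by Theorem~\ref{second_obs} the conclusion is false when $x\in\mathfrak{m}^2$. Indeed $\widetilde{\partial}_i\widetilde{\partial}_{i+1}$ has all entries in $\mathfrak{m}^2$, so $x\,\sigma_i$ does too; as $x\notin\mathfrak{m}^2$, no entry of $\sigma_i$ can be a unit, i.e.\ $\sigma_i$ has entries in $\mathfrak{m}$. Since $\widetilde{\partial}_i$ and $x$ also lie in $\mathfrak{m}$, $(P,\partial')$ is a \emph{minimal} totally acyclic complex with every $P_i\neq 0$, and a routine Nakayama argument shows that none of its cokernels is free. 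Taking $N=\mathrm{Coker}(\partial'_1\colon P_1\to P_0)$ produces a non-free totally reflexive $S$-module, as desired. I expect the genuine work to be bookkeeping rather than conceptual: pinning down the signs so that $(\partial')^2=0$, and---more subtly---the reduction from general $d$ to $d=1$, since a regular sequence of elements avoiding $\mathfrak{m}^2$ need not retain that property modulo one of its members unless the elements are independent modulo $\mathfrak{m}^2$.
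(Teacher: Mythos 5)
Your construction is the one in Section~2 of the paper: lift the differentials $\delta_i$ to $\tilde{\delta}_i$ over $S$, extract the homotopy $M_{i+1}$ (your $\sigma_i$) from $\tilde{\delta}_i\tilde{\delta}_{i+1}=xM_{i+1}$, and assemble the $2\times 2$ block differentials $\epsilon_i$ with the alternating signs. So the core of the argument is identical. The one genuine difference is how you verify total acyclicity of the lifted complex $P$: you reduce mod $x$, observe the degreewise-split short exact sequence of complexes $0\to F[-1]\to P\otimes_S R\to F\to 0$, conclude $P\otimes_S R$ and $\mathrm{Hom}_R(P\otimes_S R,R)$ are exact (the latter is where total, not merely plain, acyclicity of $F$ is used), and then run Nakayama on the multiplication-by-$x$ sequence; the paper instead chases elements in $\mathrm{ker}(\epsilon_i)$ and $\mathrm{ker}(\epsilon_{i+1}^t)$ directly, using that $x$ is a nonzerodivisor on $S$. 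Both are correct; your version is arguably cleaner and makes the role of the dual exactness of $F$ more transparent, while the paper's has the virtue of being fully explicit at the matrix level.

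Your closing remark about the reduction from general $d$ to $d=1$ flags a real subtlety. As literally stated, with only ``$x_i\in\mathfrak{m}\setminus\mathfrak{m}^2$,'' the theorem is false: take $S=k[[x,y]]$, $x_1=x$, $x_2=x+y^2$. This is a regular sequence with both elements outside $\mathfrak{m}^2$, yet $R=S/(x_1,x_2)\cong k[y]/(y^2)$ is a Gorenstein Artinian ring with non-free totally reflexive modules, while $S$ is regular and has none. What the induction genuinely needs is that the image of $x_{i+1}$ in $S/(x_1,\ldots,x_i)$ lies outside the square of the new maximal ideal, which amounts to the $x_i$ being linearly independent modulo $\mathfrak{m}^2$ (equivalently, part of a minimal generating set of $\mathfrak{m}$). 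You correctly note this hypothesis and that it is automatic in the setting the paper actually uses in Section~2, namely $S$ standard graded and the $x_i$ linear forms forming a regular sequence.
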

The proof in~(\cite{Ta}) is non-constructive; in  Section 2 we give a constructive approach to this result in the graded case, where we indicate how a minimal totally acyclic complex over $R$ can be used to build a minimal totally acyclic complex over $S$.

Once we have reduced to an Artinian ring, the easiest to detect totally reflexive modules are provided by pairs of exact zero-divisors. Exact zero-divisors do not usually exist in the original ring of positive dimension. We will give examples where they can be found in specializations, thus allowing us to conclude that the original ring also had non-free totally reflexive modules. The examples that we focus on in Section 3 are Stanley-Reisner rings of connected  graphs. These are two-dimensional Cohen-Macaulay rings, and, after modding out by a linear system of parameters, they satisfy $\m ^3=0$ . We will give some necessary conditions for the existence of non-free totally reflexive modules, as well as examples where we can find pairs of exact zero divisors in the specialization. 

Most of the constructions of totally reflexive modules in literature start with a  pair of exact zero divisors, which can then be used to construct more complicated modules. We are only aware of one example (Proposition 9.1 in \cite{CJRSW}) of a ring which admits non-free totally reflexive modules, but does not have exact zero divisors. This example occurs over a characteristic two field, and can be considered a pathological case (the ring defined by the same equations over a field of characteristic different from two will have exact zero divisors). In Section 4 we provide another, characteristic-free example of a ring that does not have exact zero divisors, but has non-free totally reflexive modules. Moreover, we indicate how to construct infinitely many non-indecomposable non-isomorphic totally reflexive modules over this ring. It is likely that our example can be generalized to a family of rings with these properties.

In Section 5 we consider Artinian local rings $(R, \m)$ with $\m^3=0$ which contains the complex numbers, and we describe a construction that gives rise to uncountably many non-isomorpic indecomposable totally reflexive modules, under the assumption that one such non-free module exists.
It has been known from \cite{CPST} that, under the assumptions above, there would be infinitely many such modules, but, to the best of our knowledge, this is the first time that an explicit construction is provided that does not use a pair of exact zero-divisors. 

{\em Acknowledgemets:} We thank Ryo Takahashi for pointing us to his paper (\cite{Ta}) and Hailong Dao for introducing us to the results of (\cite{DT}).

\section{Lifting totally acyclic complexes}

In this section we let $S=k \oplus S_1 \oplus \cdots $ be a standard graded ring, and $x_1, \ldots, x_d \in S_1$ be a regular sequence. We let $R=S/(x_1, \ldots, x_d)$. It is known from (\cite{Ta}, Proposition 4.6) that if $R$ has non-free totally reflexive modules, then so does $S$. In this section we provide an explicit construction for totally acyclic complexes over $S$ that give rise to such modules, using totally acyclic complexes over $R$ as a starting point.
\begin{construction}\label{construction}
Given 
\begin{equation}\label{complex1} \cdots \longrightarrow R^{b_{i+1}} \stackrel{\delta_{i+1}}{\longrightarrow}R^{b_i}\stackrel{\delta_i}{\longrightarrow } \cdots \end{equation}
a doubly infinite minimal totally acyclic $R$-complex, 
we will construct a doubly infinite minimal totally acyclic $S$- complex
\begin{equation}\label{complex}
\cdots \longrightarrow S^{2b_{i+1}} \stackrel{\epsilon_{i+1}}{\longrightarrow}S^{2b_i}\stackrel{\epsilon_i}{\longrightarrow} \cdots
\end{equation}
Any cokernel in the complex (\ref{complex}) will be a non-free totally reflexive $S$-module.
Let $\tilde{\delta_i}: S^{b_i} \rightarrow S^{b_{i-1}}$ denote a lifting of $\delta_i$ to $S$, for all $i \in {\bf Z}$. We will view these maps as matrices with entries in $S$. Since $\delta_{i}\delta_{i+1}=0$, it follows that there exists a matrix $M_{i+1}$ with entries in $S$ such that 
\begin{equation}\label{matrix} \tilde{\delta}_i\tilde{\delta}_{i+1}= xM_{i+1}. \end{equation}
We define $\epsilon_i$ as follows: if $i$ is even, then 
$$
\epsilon_i = \left[ \begin{array}{cc} \tilde{\delta}_i & xI_{b_{i-1}} \\ M_i & \tilde{\delta}_{i-1} \\ \end{array}\right], 
$$
If $i$ is odd, 
$$
\epsilon_i= \left[ \begin{array}{cc} \tilde{\delta}_i & -xI_{b_{i-1}} \\ -M_i & \tilde{\delta}_{i-1}\\ \end{array} \right]
$$
\end{construction}
Note that if all the entries of $\delta_i$ are in the homogeneous maximal ideal of $R$ for all $i$, then all the entries of $\epsilon_i$ will be in the homogeneous maximal ideal of $S$ (since $x$ has degree one and the entries of $\tilde{\delta_i}\tilde{\delta}_{i+1}$ have degree at least two,  equation (\ref{matrix}) shows that the entries of $M_i$ cannot be units).
We check that (\ref{complex}) is a complex. Let $i$ be even. We have
$$
\epsilon_i \epsilon_{i+1} = \left[ \begin{array}{cc}  \tilde{\delta}_i \tilde{\delta}_{i+1} - xM_{i+1} & -x\tilde{\delta}_i + x \tilde{\delta}_i \\ M_i \tilde{\delta}_{i+1} - \tilde{\delta}_{i-1} M_{i+1} & -xM_i + \tilde{\delta_{i-1}}\tilde{\delta}_i \\ \end{array}\right]
$$
Using equation (\ref{matrix}), we see that all entries are zero except possibly $M_i \tilde{\delta}_{i+1}-\tilde{\delta}_{i-1} M_{i+1}$. However, we have
$$
x(M_i\tilde{\delta}_{i+1}-\tilde{\delta}_{i-1}M_{i+1})= (xM_i)\tilde{\delta}_{i+1} - \tilde{\delta}_{i-1}(xM_{i+1})= \tilde{\delta}_{i-1} \tilde{\delta}_i \tilde{\delta}_{i+1} - \ \tilde{\delta}_{i-1} \tilde{\delta}_i \tilde{\delta}_{i+1} =0.
$$
The calculation is similar if $i$ is odd. 

Now we check that the complex (\ref{complex}) is exact. Let $i$ be even, and let $c=[c_1, c_2]^t \in \mathrm{ker}(\epsilon_i)$, where $c_1 \in S^{b_i}$ and $c_2 \in S^{b_{i-1}}$. We have $\tilde{\delta_i} c_1 +xc_2=0$, and therefore $\delta_i(\overline{c_1})=0$. The exactness of (\ref{complex1}) implies that there are elements $d_1, d_2 \in S$ such that $c_1=\tilde{\delta}_{i+1}d_1 - xd_2$.

Define
$$
\left[\begin{array}{c} c_1' \\ c_2' \\ \end{array}\right] := \left[\begin{array}{c} c_1 \\ c_2 \\ \end{array}\right] - \epsilon_{i+1} \left[ \begin{array}{c} d_1 \\ d_2 \\ \end{array}\right].
$$
It is clear that $c_1 ' =0$ and $[c_1', c_2']^t \in \mathrm{ker}(\epsilon_i)$.
It follows that $xc_2'=0$. Since $x \in S$ is a regular element, we must have $c_2'=0$. In other words, $[c_1, c_2]^t= \epsilon_{i+1}[d_1, d_2]^t\in \mathrm{im}(\epsilon_{i+1})$, which is what we wanted to show.

The calculation is similar if $i$ is odd. 

We also need to check that the dual of the complex (\ref{complex}) is exact.
Let $i$ be even and let $[c_1, c_2]^t \in \mathrm{ker}(\epsilon_{i+1}^t)$, where $c_1 \in S^{b_i}$ and $c_2 \in S^{b_{i-1}}$.
We have 
$$
\epsilon_{i+1}^t=\left[ \begin{array}{cc} \tilde{\delta}_{i+1} ^t &  -M_{i+1}^t\\ -xI_{b_i} & \tilde{\delta}_i^t \\ \end{array}\right].
$$
It follows that $-xc_1 + \tilde{\delta}_i^t c_2=0$, so $\delta_i^t(\overline{c_2})=0.$ 
Due to the exactness of the dual of (\ref{complex1}), we have $c_2 = xd_1 + \delta_{i-1}^t d_2$ for some $d_1, d_2 \in S$.
Define
$$
\left[\begin{array}{c} c_1' \\ c_2'\\ \end{array}\right] := \left[ \begin{array}{c} c_1 \\ c_2 \\ \end{array}\right] - \epsilon_i^t \left[ \begin{array}{c} d_1 \\ d_2 \\ \end{array}\right].
$$
It is clear that $c_2'=0$ and $[c_1', c_2']^t \in \mathrm{ker}(\epsilon_{i+1}^t)$. Therefore, we have $xc_1' =0$. Since $x \in S$ is a regular element, it follows that $c_1'=0$, and thus $[c_1, c_2]^t = \epsilon_i^t [d_1, d_2]^t \in \mathrm{im}(\epsilon_i^t)$, which is what we wanted to show.

The calculation is similar if $i$ is odd.

\section{Stanley-Reisner rings of graphs}

Let $\Gamma = (V, E)$ be a connected graph, where  $V=\{x_1, \ldots, x_n\}$ is the set of vertices, and $E$  is the set of edges. Let $k$ be an infinite field. The Stanley-Reisner ring of $\Gamma$ over $k$ is
$$
R_{\Gamma}=\frac{k[X_1, \ldots, X_n]}{I_{\Gamma}}
$$
where $I_{\Gamma}$ is the ideal generated by all the monomials $X_iX_j$ for which $\{x_i, x_j\}\notin E$, and all monomials $X_iX_jX_k$ with distinct $i, j, k$. 
The general theory of Stanley-Reisner rings (see \cite{BH}, Corollary 5.3.9) shows that, under the assumption that $\Gamma$ is connected, $R_{\Gamma}$ is a two-dimensional Cohen-Macaulay ring. We investigate the existence of non-free  totally reflexive modules for $R_{\Gamma}$ via reducing modulo a linear system of parameters.
We denote $|V|=n$ and $|E|=e$.
\begin{obs}\label{calcul}
 Let $l_1, l_2 \in k[X_1, \ldots, X_n]$ be general linear forms. 
Then $R:=R_{\Gamma}/(l_1, l_2)$ is an Artinian ring with maximal ideal $\mathfrak{m}$. We have $\mathfrak{m}^3=0$, $\mathrm{dim}_k \frak{m}/\frak{m}^2 = n-2$, and $\mathrm{dim}_k \mathfrak{m}^2=e-n+1$.
\end{obs}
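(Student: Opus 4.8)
The plan is to compute the Hilbert function of $R=R_\Gamma/(l_1,l_2)$ by combining two standard facts: first, that the Hilbert series of $R_\Gamma$ is determined by the face vector (f-vector) of the simplicial complex associated to $\Gamma$, and second, that quotienting a two-dimensional Cohen-Macaulay standard graded ring by a general (hence regular) linear system of parameters $l_1,l_2$ leaves the Hilbert series of $R_\Gamma$ multiplied by $(1-t)^2$. Since $R_\Gamma$ is two-dimensional Cohen-Macaulay and $k$ is infinite, the general linear forms $l_1,l_2$ form a regular sequence, so $R$ is Artinian and $\dim_k R_j = $ the coefficient of $t^j$ in $(1-t)^2 H_{R_\Gamma}(t)$.

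Next I would write down $H_{R_\Gamma}(t)$ explicitly from the definition of $I_\Gamma$. The Stanley-Reisner complex $\Delta$ here has $f_{-1}=1$ (the empty face), $f_0 = n$ vertices, and $f_1 = e$ edges (and no higher faces, since all squarefree monomials of degree three are killed). Hence
$$
H_{R_\Gamma}(t) = 1 + \frac{nt}{1-t} + \frac{et^2}{(1-t)^2} = \frac{(1-t)^2 + nt(1-t) + et^2}{(1-t)^2}.
$$
Therefore $(1-t)^2 H_{R_\Gamma}(t) = 1 + (n-2)t + (e-n+1)t^2$, where I have expanded $(1-t)^2 + nt(1-t) + et^2 = 1 - 2t + t^2 + nt - nt^2 + et^2 = 1 + (n-2)t + (e-n+1)t^2$. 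Reading off coefficients gives $\dim_k R_0 = 1$, $\dim_k R_1 = n-2$, $\dim_k R_2 = e-n+1$, and $\dim_k R_j = 0$ for $j\ge 3$. Since $R$ is standard graded with $R_0 = k$, we have $\m = R_1 \oplus R_2$, so $\m^3 = 0$, $\dim_k \m/\m^2 = \dim_k R_1 = n-2$, and $\dim_k \m^2 = \dim_k R_2 = e-n+1$, which is exactly the claim.

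The only genuine point requiring care — the ``main obstacle,'' though it is standard — is justifying that passing to $R_\Gamma/(l_1,l_2)$ multiplies the Hilbert series by $(1-t)^2$ on the nose, with no correction terms. This is precisely where Cohen-Macaulayness of $R_\Gamma$ (guaranteed by connectedness of $\Gamma$ via \cite{BH}, Corollary 5.3.9, as already noted in the text) enters: for a general choice of $l_1,l_2$ over the infinite field $k$, these form a homogeneous system of parameters, and on a Cohen-Macaulay ring a homogeneous s.o.p.\ is automatically a regular sequence, so each $-/l_i$ exactly multiplies the Hilbert series by $(1-t)^{\deg l_i} = (1-t)$. (Equivalently one may invoke that $\dim_k R_1 = n-2$ because two general linear forms are independent modulo the span of the variables, and then note that $e-n+1 \ge 0$ and the top-degree computation forces Cohen-Macaulayness to be used in exactly the way stated.) Everything else is the elementary polynomial expansion above.
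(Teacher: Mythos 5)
Your proof is correct and follows essentially the same route as the paper: compute the Hilbert series of $R_{\Gamma}$ (you via the Stanley--Reisner $f$-vector formula, the paper by directly counting the monomials spanning each graded piece — the two are equivalent), then use Cohen--Macaulayness and the generality of $l_1, l_2$ to conclude that $H_R(t) = (1-t)^2 H_{R_{\Gamma}}(t)$ and read off the coefficients.
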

\begin{proof}
Note that the degree two component of $R_{\Gamma}$ is generated by $X_1^2, \ldots, X_n^2$, and $X_iX_j$ with $\{x_i, x_j \} \in E$, and the degree three component is generated by $X_1^3, \ldots, X_n^3$, and $X_i^2X_j, X_iX_j^2$  with $\{x_i, x_j\} \in E$.
Therefore, the Hilbert series of $R_{\Gamma}$ has the form
$$
H_{R_{\Gamma}}(t) = 1+nt + (n+e)t^2+ (n+2e)t^3+\cdots 
$$
Since the images of two general linear form $l_1, l_2$ are a regular sequence in $R_{\Gamma}$, we have 
$$
H_R(t)=(1-t)^2 H_{R_{\Gamma}}(t)= 1+(n-2)t+(e-n+1)t^2 +0t^3
$$
which proves the claim.
\end{proof}
\begin{obs}
In order for $\Gamma$ to be connected, we must have $e \ge n-1$. We have $\mathfrak{m}^2=0\Leftrightarrow e=n-1 \Leftrightarrow \Gamma$ is a tree. 
\end{obs}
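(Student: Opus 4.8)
The plan is to compute the Hilbert series manipulations in Observation~\ref{calcul} and then read off the two equivalences from the resulting formula, together with the combinatorial fact that a connected graph with the fewest possible edges is precisely a tree.

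First, I would record that connectivity of $\Gamma$ forces $e \ge n-1$: any connected graph on $n$ vertices has at least $n-1$ edges, since a spanning tree already uses $n-1$ of them. This is the only external input needed beyond Observation~\ref{calcul}.

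Next, I would invoke Observation~\ref{calcul} directly: it gives $\dim_k \mathfrak{m}^2 = e-n+1$. Since $\mathfrak{m}^2 = 0$ is equivalent to $\dim_k \mathfrak{m}^2 = 0$, and $\dim_k \mathfrak{m}^2 = e-n+1$, we get $\mathfrak{m}^2 = 0 \Leftrightarrow e - n + 1 = 0 \Leftrightarrow e = n-1$. This is immediate once Observation~\ref{calcul} is in hand; there is essentially no obstacle here.

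Finally, for the equivalence $e = n-1 \Leftrightarrow \Gamma$ is a tree, I would argue both directions from the standard characterization of trees: a connected graph is a tree if and only if it has exactly $n-1$ edges. Since $\Gamma$ is assumed connected, ``$\Gamma$ has $n-1$ edges'' and ``$\Gamma$ is a tree'' say the same thing. The main (very mild) point to be careful about is that this equivalence uses the standing hypothesis that $\Gamma$ is connected — without it, $e = n-1$ would only force $\Gamma$ to be a forest, not a tree. Chaining the two equivalences together yields $\mathfrak{m}^2 = 0 \Leftrightarrow e = n-1 \Leftrightarrow \Gamma$ is a tree, which is the assertion. I expect no serious obstacle; the statement is essentially a bookkeeping corollary of Observation~\ref{calcul} plus elementary graph theory.
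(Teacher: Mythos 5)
Your proof is correct and is exactly the implicit argument behind the observation; the paper in fact states this observation without supplying a proof, evidently regarding it as an immediate consequence of Observation~\ref{calcul} together with the standard fact that a connected graph on $n$ vertices has at least $n-1$ edges, with equality precisely when it is a tree. Your write-up simply makes that reasoning explicit, and your remark that connectivity is needed (otherwise $e=n-1$ only gives a forest) is the right thing to flag.
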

Note that if $\m^2=0$, then it is known that $R$ does not have non-free totally reflexive modules (see \cite{Yoshino}).
In \cite{Yoshino}, Yoshino gives the following necessary conditions for an Artinian ring with $\m^3=0$ to have non-free totally reflexive modules:
\begin{theorem}\label{Yoshino}\rm(\cite{Yoshino}, Theorem 3.1)
Let $(R, \m)$ be a non-Gorenstein local ring with $\m ^3=0$. Assume that $R$ contains a field $k$ isomorphic to $R/\m$, and assume that there is a non-free totally reflexive $R$-module $M$. Then:

(1) $R$ has a natural structure of homogeneous graded ring with $R=R_0 \oplus R_1 \oplus R_2$ with $R_0=k$, $\mathrm{dim}_k(R_1)=r+1$, and $\mathrm{dim}_k(R_2)=r$, where $r$ is the type of $R$. Moreover, $(0:_R \m)=\m^2$. 

(2) $R$ is a Koszul algebra.

(3) $M$ has a natural structure of graded $R$-module, and, if $M$ is indecomposable, then the minimal free resolution of $M$ has the form
$$
\cdots \rightarrow R(-n-1)^b \rightarrow R(-n)^b \rightarrow \cdots \rightarrow R(-1)^b \rightarrow R^b \rightarrow M \rightarrow 0
$$.
In other words, the resolution of $M$ is linear with constant betti numbers.
\end{theorem}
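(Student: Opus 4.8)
The plan is to read off all of the asserted structure from a single minimal totally acyclic complex over $R$, exploiting how $\m^3=0$, minimality, and the self-duality of such complexes constrain it. First I would fix a minimal totally acyclic complex $F\colon\ \cdots\to F_{i+1}\xrightarrow{\partial_{i+1}}F_i\xrightarrow{\partial_i}\cdots$ with $M\cong\mathrm{Coker}(\partial_1)$, set $b_i=\mathrm{rank}_R F_i$, and let $C_i=\mathrm{Coker}(\partial_{i+1})$ be the cosyzygies; by definition $F$ and $F^*=\mathrm{Hom}_R(F,R)$ are both minimal, each $b_i\ge 1$ (else $M$ would have finite projective dimension and hence be free), and every $C_i$ and $C_i^*$ is totally reflexive with no free summand. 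Since the entries of $\partial_i$ lie in $\m$, one has $\partial_i(\m^2 F_i)\subseteq\m^3 F_{i-1}=0$, so $\m^2 F_i\subseteq\ker\partial_i=\mathrm{im}\,\partial_{i+1}\subseteq\m F_i$; hence $\m^2 C_i=0$ for all $i$, and (identifying $C_i$ with $\mathrm{im}\,\partial_i\subseteq F_{i-1}$) also $\m^2 F_{i-1}\subseteq C_i$ with $\m C_i\subseteq\m^2 F_{i-1}$. Moreover $\m C_i\ne 0$: otherwise $C_i\cong k^{b_i}$ would exhibit $k$ as a summand of a totally reflexive module, forcing $R$ Gorenstein. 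Finally, because $\m^3=0$, any minimal presentation $R\cong k[X_1,\dots,X_\mu]/J$ has $J$ generated by quadrics together with $(X_1,\dots,X_\mu)^3$, so $J$ is homogeneous; this is the grading $R=k\oplus R_1\oplus R_2$ of (1), with $\dim_k R_1=\mu=\dim_k\m/\m^2$ and $\dim_k R_2=\sigma=\dim_k\m^2$.

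Next I would do the length bookkeeping. The exact sequences $0\to C_{i+1}\to F_i\to C_i\to 0$ give $\ell(C_i)+\ell(C_{i+1})=\ell(F_i)=(1+\mu+\sigma)b_i$, and since $\mathrm{Ext}^1_R(C_i,R)=0$, applying $\mathrm{Hom}_R(-,R)$ yields the identical recursion $\ell(C_i^*)+\ell(C_{i+1}^*)=(1+\mu+\sigma)b_i$. From $\m^2 C_i=0$ we get $b_i\le\ell(C_i)\le(1+\mu)b_i$, and from $\m^2 F_{i-1}\subseteq C_i$, $\m C_i\subseteq\m^2 F_{i-1}$ we get $\sigma b_{i-1}\le\ell(C_i)\le\sigma b_{i-1}+b_i$; the same estimates hold for the $C_i^*$, since $F^*$ is again a minimal totally acyclic complex whose ranks are the $b_i$ read in the opposite order. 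Substituting the recursions into these two-sided bounds already forces convexity-type inequalities on $(b_i)$ and on its dual counterpart.

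The heart of the proof is then to upgrade these inequalities, using the reflexivity identities $C_i^{**}\cong C_i$ and the fact that the complex is doubly infinite with no free summand, to the conclusion that all the length bounds above are simultaneously equalities and that $b_i$ is independent of $i$; equivalently, in the grading above, the minimal totally acyclic complex is \emph{linear} (its differentials are matrices of linear forms) with constant rank $b$. Granting this, the linearity forces the recurrence $b_{i+1}=\mu b_i-\sigma b_{i-1}$ for all $i\in\mathbb Z$ (polynomiality of the Hilbert series of each $C_j$), and $b_i\equiv b$ then gives $\mu-\sigma=1$; when $M$ is indecomposable this linear complex is exactly the asserted resolution $\cdots\to R(-n-1)^b\to R(-n)^b\to\cdots\to R^b\to M\to 0$, which is (3). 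For the rest of (1): one always has $\m^2\subseteq(0:_R\m)$, and if the inclusion were strict a socle element $\theta\in\m\setminus\m^2$ would make $R$ a nontrivial fiber product $R'\times_k k[\theta]/(\theta^2)$ with $R'\ne k$ (as $R$ is non-Gorenstein), and such a fiber product has only free totally reflexive modules — a contradiction (or one rules out such a $\theta$ directly from the complex). Hence $(0:_R\m)=\m^2$, so the type is $r=\dim_k(0:_R\m)=\sigma$ and $\mu=r+1$, finishing (1). For (2), a non-Koszul short local ring behaves homologically like a Golod ring and therefore admits no non-free totally reflexive module; since $R$ has one, $R$ is Koszul (concretely, the linear totally acyclic complex over the associated graded ring forces $k$ itself to have a linear resolution there).

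The main obstacle is precisely the rigidity step of the third paragraph: the elementary length estimates only give inequalities, and pushing them — via self-duality and the double infiniteness of the complex — all the way to ``linear resolution with constant Betti numbers'' is where the real work lies. The Koszul conclusion is the secondary difficulty, as it requires genuine input on the homology of short local rings rather than just the bookkeeping above.
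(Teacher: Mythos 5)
The paper does not give a proof of this result; it is quoted verbatim from Yoshino \cite{Yoshino} (his Theorem~3.1) and used as a black box throughout Section~3, so there is no proof in the paper to compare your attempt against. Judging your sketch on its own terms: the setup is sound (a minimal totally acyclic complex, $b_i\ge 1$, $\m^2 F_i\subseteq\ker\partial_i$ hence $\m^2 C_i=0$, $\m C_i\ne 0$ since $k$ totally reflexive would force $R$ Gorenstein, and the automatic grading $R=k\oplus R_1\oplus R_2$ from $\m^3=0$), and you correctly identify the length recursion $\ell(C_i)+\ell(C_{i+1})=(1+\mu+\sigma)b_i$ and its dual, together with the two-sided bounds on $\ell(C_i)$, as the raw material; the recurrence $b_{j}=\mu b_{j-1}-\sigma b_{j-2}$ and $\mu=\sigma+1$ do follow once linearity and constancy of the $b_i$ are established. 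But you are explicit that the central rigidity step --- upgrading the inequalities to equalities so that the differentials are linear and the $b_i$ constant --- is not carried out, and that is exactly where the substance of Yoshino's theorem lies, so as a proof this is incomplete at its crux.

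Three further steps are asserted rather than argued, and each would need real work. For $(0:_R\m)=\m^2$ you invoke a fiber-product decomposition $R\cong R'\times_k k[\theta]/(\theta^2)$ together with the claim that such fiber products carry only free totally reflexive modules; both halves are theorems (the latter in particular fails for arbitrary fiber products), so this cannot be waved through. For the Koszul property in (2), the slogan that a non-Koszul short local ring ``behaves homologically like a Golod ring'' is not an argument --- it is not the case that every non-Koszul Artinian algebra with $\m^3=0$ is Golod --- and one needs the genuine input that a module with an eventually linear minimal free resolution over such a ring forces $R$ to be Koszul. Finally, part (3) also asserts that $M$ itself acquires a \emph{natural} graded $R$-module structure compatible with the chosen grading on $R$; your sketch does not address this at all, and it is a nontrivial rigidity statement, not a formal consequence of the length bookkeeping.
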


Based on Yoshino's result, we conclude that the following are necessary conditions for $R_{\Gamma}$ to have non-free totally reflexive modules:
\begin{obs}
If  $R_{\Gamma}$ has non-free totally reflexive modules, then the following must hold:

{\rm (a).}  $ e=2n-4$.

{\rm (b).} $\Gamma$ does not have any cycles of length 3. 

{\rm (c.)} $\Gamma$ does not have  leaves (a leaf is a vertex  which belongs to only one edge).
\end{obs}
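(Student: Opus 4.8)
The plan is to push everything down to an Artinian reduction and quote Yoshino's structure theorem (Theorem~\ref{Yoshino}). Fix a linear system of parameters $\{l_1,l_2\}$ for $R_\Gamma$; as $R_\Gamma$ is Cohen--Macaulay this is a regular sequence, so Observation~\ref{first_obs} gives a non-free totally reflexive module over $R:=R_\Gamma/(l_1,l_2)$, and Observation~\ref{calcul} says $R$ is standard graded Artinian over $k$ with $\m^3=0$, $\dim_k\m/\m^2=n-2$, $\dim_k\m^2=e-n+1$. We may assume $R$ is not Gorenstein (otherwise Theorem~\ref{Yoshino} does not apply, and such $R_\Gamma$ already have non-free totally reflexive modules, so this is not the intended case), and that $\m^2\ne0$ (if $\m^2=0$, Theorem~\ref{Yoshino}(1) would force the type of $R$ to be $0$, so $R$ has no non-free totally reflexive module). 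Thus Theorem~\ref{Yoshino} applies to $R$.

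For (a): Theorem~\ref{Yoshino}(1) gives $\dim_k\m/\m^2=r+1$ and $\dim_k\m^2=r$, $r$ the type of $R$, so $\dim_k\m/\m^2=\dim_k\m^2+1$; with Observation~\ref{calcul} this reads $n-2=(e-n+1)+1$, i.e.\ $e=2n-4$. For (b): Theorem~\ref{Yoshino}(2) says $R$ is Koszul, and since $\{l_1,l_2\}$ is a linear regular sequence, $R_\Gamma$ is then Koszul as well (the Koszul property passes through quotients by linear non-zerodivisors in both directions). By Fr\"oberg's criterion a Stanley--Reisner ring is Koszul iff its defining ideal is generated by quadrics, i.e.\ the complex is flag; the minimal non-faces of the one-dimensional complex $\Gamma$ are the non-edges and the triangles, so $I_\Gamma$ has a minimal cubic generator exactly when $\Gamma$ contains a $3$-cycle. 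Hence $\Gamma$ is triangle-free.

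Part (c) is where an actual idea is needed. Suppose $x_1$ is a leaf with unique neighbour $x_2$; the crucial fact is that $X_1X_j=0$ in $R_\Gamma$ for every $j\notin\{1,2\}$, these pairs being non-edges. Use the general linear system of parameters of Observation~\ref{calcul}. Then $\m/\m^2\cong R_1$ has dimension $n-2$, and for a general choice $\langle X_3,\dots,X_n\rangle$ and $\langle l_1,l_2\rangle$ are complementary subspaces of $(R_\Gamma)_1$, so the $n-2$ classes $\overline{X_3},\dots,\overline{X_n}$ form a basis of $R_1$ (this needs $n\ge4$; a leaf with $n\le3$ only occurs for the small Gorenstein graphs already excluded). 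Hence $\overline{X_1}$ and $\overline{X_2}$ are $k$-linear combinations of $\overline{X_3},\dots,\overline{X_n}$, and therefore $\overline{X_1}\cdot\overline{X_i}=0$ for all $i$: for the case $i\ge3$ this is the leaf relation, and for $i\in\{1,2\}$ one expands $\overline{X_i}$ in the basis and applies the case $i\ge3$. So $\overline{X_1}$ is a nonzero degree-one element of $(0:_R\m)$, giving $(0:_R\m)\supsetneq\m^2$ and contradicting Theorem~\ref{Yoshino}(1). Therefore $\Gamma$ has no leaf.

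I expect (c) to carry the weight: (a) and (b) only assemble Yoshino's theorem with Observation~\ref{calcul} and the Koszul--flag dictionary, while (c) turns on the observation that a general Artinian reduction collapses the leaf variable $X_1$ into the socle. The only delicate points in (c) are the genericity bookkeeping (that $\overline{X_3},\dots,\overline{X_n}$ is a basis and $\overline{X_1}\ne0$) and isolating the small-$n$ cases, which are precisely the Gorenstein ones set aside at the start.
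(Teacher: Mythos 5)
Your proof is correct and follows the same strategy as the paper: reduce modulo a linear system of parameters to the Artinian $R=R_\Gamma/(l_1,l_2)$ and apply Yoshino's structure theorem, with (a) coming from the dimension count $n-2=(e-n+1)+1$, (b) from the Koszulness requirement ruling out cubic defining relations, and (c) from showing the image of a leaf variable would be a degree-one socle element, contradicting $(0:_R\m)=\m^2$. The only differences are in presentation: you route (b) through Fr\"oberg's criterion and the preservation of Koszulness along linear regular sequences rather than the paper's direct (and somewhat terse) remark that the cubic $X_kX_lX_j$ survives in the quotient, and in (c) you make explicit the genericity checks the paper leaves implicit, namely that $\overline{X_1}\neq0$ and that the remaining variable classes span $R_1$ after eliminating the leaf and its neighbour.
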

\begin{proof} 
If $R_{\Gamma}$ has non-free totally reflexive modules, then so does $R_{\Gamma}/(l_1, l_2)$. We apply the necessary conditions from Theorem (\ref{Yoshino}) to the ring $R=R_{\Gamma}/(l_1, l_2)$.
Part (a) is immediate using the calculation from Observation (\ref{calcul}). Part (b) is a consequence of the requirement that $R$ is a Koszul algebra, which in particular implies that it has to be defined as a quotient of a polynomial ring by quadratic equation. If the graph $\Gamma$ has a cycle consisting of vertices $x_k, x_l, x_j$, then $x_kx_lx_j$ would be one of the defining equations of the Stanley-Reisner ring, and also one of the defining equations of $R_{\Gamma}/(l_1, l_2)$ (viewed as a quotient of a polynomial ring in two fewer variables). To see (c)., assume that there is a vertex $x_k$ of $\Gamma$ that belongs to only one edge, say $\{ x_k, x_j\}$. We can use the equations $l_1, l_2$ to replace the variables $X_j, X_k$ by linear combinations of the remaining variables, and view $R=R_{\Gamma}/(l_1, l_2)$ as a quotient of a polynomial ring in these remaining variables. Then the image of $X_k$ annihilates the images of all the variables, and therefore $\overline{X_k}\in (0:_R \m)$. This contradicts the condition $(0:_R \m)=\m^2$ from Theorem (\ref{Yoshino}).

\end{proof}

We point out that conditions (1) and (2) in Theorem (\ref{Yoshino}) are far from being sufficient for the existence of non-free totally reflexive modules in the case of rings with $\m^3=0$. We will give examples that satisfy these conditions, but do not have totally reflexive modules (see Proposition \ref{no_totally_reflexive}).

We will be able to obtain better results when the graph $\Gamma$ is bipartite, i.e. the vertices can be labeled $x_1, \ldots, x_k, y_1, \ldots, y_l$, and all the edges are of the form $\{x_i, y_j\}$ for some $i, j$. This in particular implies that the graph does not have cycles of length three (in fact, a graph is bipartite if and only if it does not have any cycles of odd length). The following observation is easy to check:

\begin{obs}\label{prime}
Let $\Gamma$ be a bipartite graph, and let 
$$l_1= \sum_{i=1}^k X_i,  l_2=\sum_{j=1}^l Y_j.$$
Then $l_1, l_2$ is a system of parameters. $R=R_{\Gamma}/(l_1, l_2)$ can be regarded as a quotient of $k[X_1, \ldots, X_{k-1}, Y_1, \ldots, Y_{l-1}]$, and it satisfies
\begin{equation}\label{squares}(\overline{X}_, \ldots, \overline{X}_{k-1})^2=(\overline{Y}_1, \ldots, \overline{Y}_{l-1})^2=0 \end{equation}
where $\overline{X_i}, \overline{Y_j}$ denote the images of $X_i, Y_j$ in $R$.

 For every $u \in R_1$, we write $u:=x+y$, where $x$ is a linear combination of $\overline{X}_1, \ldots, \overline{X}_{k-1}$, and $y$ is a linear combination of $\overline{Y}_1, \ldots, \overline{Y}_{l-1}$. We define $u':= x-y$, and observe 
\begin{equation} \label{zero} uu'=0 . \end{equation}
\end{obs}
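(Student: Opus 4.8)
The plan is to deduce everything directly from the monomial relations defining $R_\Gamma$, using the one feature of bipartiteness that matters here: no two $x$-vertices, and no two $y$-vertices, are joined by an edge. First I would record an identity inside $R_\Gamma$ itself. Since $\{x_i,x_j\}\notin E$ for all $i\neq j$, each $X_iX_j$ (with $i\neq j$) lies in $I_\Gamma$, and similarly each $Y_iY_j$ with $i\neq j$. Hence multiplying $l_1=\sum_{i=1}^k X_i$ by $X_i$ kills every cross term, leaving $X_i l_1 = X_i^2$ in $R_\Gamma$, and likewise $Y_j l_2 = Y_j^2$. Passing to $R=R_\Gamma/(l_1,l_2)$, I would use $l_1$ and $l_2$ to solve for $X_k$ and $Y_l$ as linear combinations of $X_1,\ldots,X_{k-1}$ and of $Y_1,\ldots,Y_{l-1}$ respectively; this presents $R$ as a quotient of $k[X_1,\ldots,X_{k-1},Y_1,\ldots,Y_{l-1}]$. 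In $R$ the relations $X_i l_1=0$ and $Y_j l_2 = 0$ become $\overline{X_i}^2=0$ and $\overline{Y_j}^2=0$, while $\overline{X_i}\,\overline{X_j}=0$ and $\overline{Y_i}\,\overline{Y_j}=0$ for $i\neq j$ carry over from $R_\Gamma$; together these say exactly that $(\overline{X}_1,\ldots,\overline{X}_{k-1})^2=(\overline{Y}_1,\ldots,\overline{Y}_{l-1})^2=0$, which is (\ref{squares}).

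For the system of parameters claim I would argue that $R$ is Artinian and invoke Cohen--Macaulayness. From the relations just obtained, $\m_R$ is generated by $\overline{X}_1,\ldots,\overline{X}_{k-1},\overline{Y}_1,\ldots,\overline{Y}_{l-1}$, and any product of three of these generators vanishes (it contains either a square or a product $\overline{X_i}\,\overline{X_j}$ or $\overline{Y_i}\,\overline{Y_j}$), so $\m_R^3=0$ and $R$ has finite length. Since $\Gamma$ is connected, $R_\Gamma$ is a two-dimensional Cohen--Macaulay ring, and a two-element sequence whose quotient is Artinian is then automatically a system of parameters. (Alternatively, one can check directly that modulo each minimal prime $P_{ij}=(X_s:s\neq i)+(Y_t:t\neq j)$ the images of $l_1,l_2$ generate the maximal ideal of $R_\Gamma/P_{ij}\cong k[X_i,Y_j]$, so $(l_1,l_2)$ has height two.)

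Finally, given $u\in R_1$ written as $u=x+y$ with $x$ a linear combination of $\overline{X}_1,\ldots,\overline{X}_{k-1}$ and $y$ a linear combination of $\overline{Y}_1,\ldots,\overline{Y}_{l-1}$, I would set $u'=x-y$ and compute $uu'=x^2-y^2$; by (\ref{squares}), $x^2$ lies in $(\overline{X}_1,\ldots,\overline{X}_{k-1})^2=0$ and $y^2$ lies in $(\overline{Y}_1,\ldots,\overline{Y}_{l-1})^2=0$, so $uu'=0$, which is (\ref{zero}). I do not expect any real obstacle: the only step needing a moment's care is the observation that $X_i l_1 = X_i^2$ in $R_\Gamma$, i.e.\ that it is the \emph{reduction modulo the linear forms}, rather than $I_\Gamma$ itself, that forces the squares of the variables to die, since $X_i^2$ is not among the monomial generators of $I_\Gamma$.
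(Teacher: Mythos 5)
Your proposal is correct and follows essentially the same route as the paper: bipartiteness gives $X_iX_j=0$ and $Y_iY_j=0$ ($i\neq j$) in $R_\Gamma$, then multiplying by $l_1$ (equivalently, substituting $X_k=-\sum_{j<k}X_j$ as the paper does) yields $\overline{X_i}^2=0$, and the s.o.p.\ claim and $uu'=0$ both follow exactly as in the paper's proof.
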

\begin{proof}
Since all the edges of the graph are of the form $\{ x_i, y_j\}$, it follows that the products of the images in $R_{\Gamma}$ of any two distinct $X_i, X_j$ is zero. Moreover, the equation $X_iX_k=0$ in $R_{\Gamma}$ translates to $\overline{X}_i(\sum_{j=1}^{k-1} \overline{X}_j)=0$ in $R$, and thus we obtain $\overline{X}_i^2=0$ in $R$. A similar argument shows that $\overline{Y}_j^2=0$, and we obtain (\ref{squares}). Now (\ref{squares}) implies that the product of the images of any three variables in $R$ is zero, and therefore $R$ satisfies $\m^3=0$. Since $\mathrm{dim}(R_{\Gamma})=2$ and $R$ is Artinian, it follows that $l_1, l_2$ is a system of parameters for $R_{\Gamma}$.
The claim (\ref{zero}) is obvious.
\end{proof}

We observe that in the case of graded  rings with $\m^3=0$ and $\mathrm{dim}_k(R_2)=\mathrm{dim}_k(R_1)-1$, there is a connection between existence of exact zero divisors and the Weak Lefschetz Property (WLP). In this case, WLP simply means that there exists an element $x \in R_1$ such that the multiplication by $x$ map $:R_1 \rightarrow R_2$ has maximal number of generators, i.e. it is surjective. See (\cite{MN}) for information regarding the  WLP.
\begin{obs}\label{wlp}
\rm{(a.)} Let  $R=k \oplus R_1 \oplus R_2$  be a standard graded ring with $R_3=0$ and $\mathrm{dim}_kR_2 = \mathrm{dim}_kR_1 -1$.  If $R$ admits a pair of exact zero divisors $x, y$, then $R$ has WLP.

\rm{(b.)} Assume that $R=R_{\Gamma}/(l_1, l_2)$, where $\Gamma$ is a bipartite graph, and $l_1, l_2$ are as in Observation (\ref{prime}). If $R$ has WLP, then $R$ admits a pair of exact zero divisors.
\end{obs}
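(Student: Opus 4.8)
For part (a), the plan is to extract the Lefschetz element directly from the exact zero divisor. Suppose $x, y \in R_1$ form a pair of exact zero divisors, so $\mathrm{Ann}_R(x) = (y)$ and $\mathrm{Ann}_R(y) = (x)$. I claim $x$ itself is a Lefschetz element, i.e. multiplication $\cdot x : R_1 \to R_2$ is surjective. Consider instead the failure of injectivity: the kernel of $\cdot x : R_1 \to R_2$ is $(R_1) \cap \mathrm{Ann}_R(x) = R_1 \cap (y)$. Since $y \in R_1$ and $R_1 \cdot R_1 \subseteq R_2$ while $R_1 \cdot R_2 \subseteq R_3 = 0$, the degree-one part of the principal ideal $(y)$ is exactly the line $ky$. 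Hence $\ker(\cdot x : R_1 \to R_2) = ky$ is one-dimensional, so the image has dimension $\dim_k R_1 - 1 = \dim_k R_2$, and the map is surjective. (One should double-check that $x \notin \mathrm{Ann}_R(x)$, equivalently $x^2 \neq 0$: if $x^2 = 0$ then $x \in \mathrm{Ann}_R(x) = (y)$, forcing $x \in ky$, so $y \in \mathrm{Ann}_R(x) = \mathrm{Ann}_R(y)$, i.e. $y^2 = 0$ and $y \in (x) = kx$; then $(x) = (y) = kx \oplus$ nothing in degree $2$, contradicting $l((x)) + l((y)) = l(R)$ since that sum would be at most $2 + 2 < \dim_k R + 1$ once $\dim_k R_1 \geq 2$. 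The degenerate small cases where $\dim_k R_2 = 0$ are Gorenstein/trivial and carry WLP vacuously.) This is the short direction.

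For part (b), the plan is to run the converse using the special structure of $R = R_\Gamma/(l_1,l_2)$ recorded in Observation \ref{prime}. Suppose $R$ has WLP, so there is $u = x + y \in R_1$ (with $x$ a combination of the $\overline{X}_i$ and $y$ a combination of the $\overline{Y}_j$) such that $\cdot u : R_1 \to R_2$ is surjective, hence by the dimension count its kernel is a single line $kv$. The candidate exact zero divisor partner is $u' = x - y$: by (\ref{zero}) we have $uu' = 0$, so $u' \in \ker(\cdot u) = kv$, and thus $u'$ spans the kernel (provided $u' \neq 0$, which holds unless $u \in \{x, -x\}$ or $u \in \{y, -y\}$ — I would argue such purely "one-sided" elements cannot be Lefschetz because their multiplication map lands in a proper subspace of $R_2$, using (\ref{squares}), so a generic WLP element is genuinely mixed and $u' \neq 0$; alternatively replace $u$ by a generic Lefschetz element). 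It then remains to show $\mathrm{Ann}_R(u) = (u')$ and $\mathrm{Ann}_R(u') = (u)$. Since $R$ is Artinian, by the remark after the definition of exact zero divisors it suffices to prove one containment together with the length identity $l((u)) + l((u')) = l(R)$, or simply $\mathrm{Ann}_R(u) = (u')$ alone.

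The key computation is $\mathrm{Ann}_R(u) = (u')$. The inclusion $(u') \subseteq \mathrm{Ann}_R(u)$ is (\ref{zero}) (note $u' \cdot R_1 \subseteq R_2$ and $u' \cdot R_2 \subseteq R_3 = 0$, so $(u') = ku' \oplus u'R_1$, all of which is killed by $u$ using $uu' = 0$ and $R_3 = 0$). For the reverse inclusion, decompose by degree: an element of $\mathrm{Ann}_R(u)$ in degree $2$ is all of $R_2$ (automatic since $R_2 \cdot R_1 \subseteq R_3 = 0$), and one checks $R_2 = u' R_1$ because $\cdot u' : R_1 \to R_2$ is also surjective — here I use that WLP for the mixed element $u = x+y$ is equivalent to WLP for $u' = x - y$ via the ring automorphism of $R$ sending $\overline{X}_i \mapsto \overline{X}_i$, $\overline{Y}_j \mapsto -\overline{Y}_j$, which fixes the defining relations (all monomials involve at most one $Y$, or are squares, by bipartiteness) and swaps $u \leftrightarrow u'$. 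In degree $1$, we must show $\ker(\cdot u : R_1 \to R_2) = ku'$, which is exactly the WLP dimension count combined with $u' \in \ker(\cdot u)$, $u' \neq 0$. Assembling the degrees gives $\mathrm{Ann}_R(u) = ku' \oplus R_2 = ku' \oplus u'R_1 = (u')$. The main obstacle is the bookkeeping around degenerate elements (the non-generic "one-sided" $u$, and the small cases $\dim_k R_1 \leq 1$), which is why invoking the sign automorphism and choosing $u$ generic among Lefschetz elements is the cleanest route; everything else is forced by degree reasons and the two displayed identities from Observation \ref{prime}.
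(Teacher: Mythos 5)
Your proof follows the paper's approach in both parts. Part (a) is the same argument: $\ker(\cdot x|_{R_1}) = R_1 \cap \mathrm{Ann}_R(x) = R_1 \cap (y) = ky$ is one-dimensional, so the image fills $R_2$. The paper additionally cites Theorem~\ref{Yoshino} to justify that exact zero divisors must lie in $R_1$, which you take as given. Your parenthetical check that $x^2 \ne 0$ is not needed for the conclusion, and the deduction inside it is off in any case: $(x)$ has a nonzero degree-two part $xR_1$, so the length estimate you give does not produce a contradiction. I would simply delete that parenthetical.

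Part (b) also matches the paper's route: pair the Lefschetz element $u = x+y$ with $u' = x-y$, identify $\ker(\cdot u|_{R_1}) = ku'$ by the dimension count, and reduce to showing $\cdot u' : R_1 \to R_2$ is surjective. For that last step you invoke the graded automorphism $\overline{X}_i \mapsto \overline{X}_i$, $\overline{Y}_j \mapsto -\overline{Y}_j$ (which indeed preserves the monomial relations of $R_\Gamma$ and the ideal $(l_1, l_2)$, and sends $u \mapsto u'$), while the paper observes directly via (\ref{squares}) that $uR_1$ and $u'R_1$ have the same span, namely the span of the $y\overline{X}_i$ and $x\overline{Y}_j$. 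These are two phrasings of the same fact; the automorphism is a clean packaging. One correction to the bookkeeping: since the $\overline{X}_i$ and $\overline{Y}_j$ are linearly independent in $R_1$, $u' = x - y = 0$ forces $x = y = 0$, i.e.\ $u = 0$. So $u' \ne 0$ is automatic for any nonzero $u$, and the discussion of one-sided elements is unnecessary. Moreover, the assertion that a one-sided element cannot be Lefschetz is false in general (e.g.\ $\Gamma = K_{2,l}$ gives a ring where $\overline{X}_1$ is Lefschetz), though this does no harm since there $u' = u$ and $(u,u)$ is still an exact zero divisor pair.
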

\begin{proof}
(a).  Assume that $(x, y)$ is a pair of exact zero-divisors. By  Theorem (\ref{Yoshino}), we have $x, y \in R_1$.  Then the kernel of the map $\cdot x: R_1 \rightarrow R_2$ is generated by $y$, and is therefore 1-dimensional as a $k$-vector space. It follows that the dimension of the image is $\mathrm{dim}_kR_1 -1=\mathrm{dim}_k(R_2)$, and therefore the map is surjective.

(b) Assume that $z \in R_1$ is such that $\cdot z : R_1 \rightarrow R_2$ is surjective. Equivalently, the kernel of this map is a one-dimensional vector space. Using the notation from Observation (\ref{prime}), we have $zz'=0$. Therefore, every element in $R_1$ that annihilates $z$ must be a scalar multiple of $z'$. We claim that $\mathrm{Ann}_R(z)=(z')$, which will then imply that $z, z'$ is a pair of exact zero divisors. It suffices to prove that $R_2 \subseteq (z')$, or, equivalently, the map $\cdot z': R_1 \rightarrow R_2$ is surjective. We can write $z=x+y$ and $z'=x-y$ as in Observation (\ref{prime}). We observe that the map $\cdot z:R_1 \rightarrow R_2$ is surjective if and only if $R_2$ is spanned by $x\overline{Y}_1, \ldots, x\overline{Y}_{l-1}, y\overline{X}_1, \ldots, y\overline{X}_{k-1}$, and the same conclusion holds for the map $\cdot z': R_1 \rightarrow R_2$. Therefore, the multiplication by $z$ map is surjective if and only if the multiplication by $z'$ map is.
 
\end{proof}

One might hope that the converse of the statement in Observation (\ref{wlp}) (a) above is true without the extra assumptions we made in Part (b). The example below shows that this is not the case. 
\begin{example}
Let 
$$R=\frac{k[X, Y]}{(X^2-Y^2, X^2-XY, X^3)}.$$
Then $R$ satisfies the assumptions from Observation (\ref{wlp}) (a.), and $R$ has WLP since the multiplication by $ax +by: R_1 \rightarrow R_2$ is surjective as long as $a+b \ne 0$. However, $R$ has a linear socle element, namely $x+y$, which implies that $R$ cannot have exact zero divisors; in fact it cannot have totally reflexive modules, by Theorem (\ref{Yoshino})(1).
\end{example}

Now we give a sufficient condition on a graph $\Gamma$ with $e=2n-4$ for $R=R_{\Gamma}/(l_1, l_2)$ to have WLP. When $\Gamma$ is a bipartite graph satisfying this condition,  Observation (\ref{wlp})(b) implies that $R$ will have a pair of exact zero divisors, and Theorem (\ref{main_theorem}) will then allow us to conclude that $R_{\Gamma}$ has non-free totally reflexive modules. 
\begin{prop}\label{exist_ezd}
\rm{a}. Let $\Gamma$ be a graph with vertex set $V=\{x_1, \ldots, x_n\}$. Assume that $e=2n-4$ and the vertices can be ordered  in such a way that for  each $i\ge 3$, there are at least two edges connecting $x_i$ to $\{x_1, \ldots, x_{i-1}\}$.  
Then $R=R_{\Gamma}/(l_1, l_2)$ has WLP for $l_1, l_2$ a  system of parameters consisting of linear forms with generic coefficients. 

\rm{b}. Assume moreover that  $\Gamma$ is bipartite with vertex set $\{x_1, \ldots, x_k, y_1, \ldots, y_l\}$ (where $n=k+l$),  and $l_1=\sum_{i=1}^kx_i, l_2=\sum_{j=1}^ly_j$. Then $R=R_{\Gamma}/(l_1, l_2)$ admits a pair of exact zero-divisors.
\end{prop}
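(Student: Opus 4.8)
The plan is to prove (a) by induction on $n$, first reducing the WLP to a statement about three linear forms on $R_\Gamma$ itself, and then to re-run essentially the same induction — keeping track of the specific forms $\sum x_i$ and $\sum y_j$ — to obtain (b). For the reduction, write $A=R_\Gamma$. If $l_1,l_2\in A_1$ is a system of parameters, then $R=A/(l_1,l_2)$ is the Artinian ring of Observation~\ref{calcul}; since $l_1,l_2$ is a regular sequence, $R_2=A_2/(l_1A_1+l_2A_1)$, so for $\ell\in R_1$ the map $\cdot\ell\colon R_1\to R_2$ is surjective if and only if $l_1A_1+l_2A_1+\ell A_1=A_2$. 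Hence $R$ has WLP if and only if there are $\lambda_1,\lambda_2,\lambda_3\in A_1$ with $\lambda_1A_1+\lambda_2A_1+\lambda_3A_1=A_2$. This condition on $(\lambda_1,\lambda_2,\lambda_3)$ is the non-vanishing of a maximal minor of the multiplication matrix, hence defines an open subset of $(A_1)^3$; once it is nonempty a generic triple satisfies it, and therefore a generic system of parameters $(l_1,l_2)$ can be completed to such a triple. So it suffices to exhibit, for every $\Gamma$ as in (a), one triple of linear forms whose products with $A_1$ span $A_2$.

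I prove this by induction on $n$. A counting argument shows the hypotheses force $x_1\not\sim x_2$ and that every $x_i$ with $i\ge 3$ has \emph{exactly} two back-edges; in particular $\deg_\Gamma(x_n)=2$, say $N(x_n)=\{x_a,x_b\}$. Let $\Gamma'=\Gamma\setminus x_n$, which satisfies the hypotheses on $n-1$ vertices with $e'=2(n-1)-4$; writing $A'=R_{\Gamma'}\subseteq A$ one has $A_1=A'_1\oplus kx_n$ and $A_2=A'_2\oplus W$, where $W=\langle x_ax_n,\,x_bx_n,\,x_n^2\rangle$ and we use $x_jx_n=0$ in $A$ for $j\notin\{a,b,n\}$. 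By induction choose $\lambda'_1,\lambda'_2,\lambda'_3\in A'_1$ with $\sum_i\lambda'_iA'_1=A'_2$ and set $\lambda_i=\lambda'_i+c_ix_n$. Then $\lambda_ix_n=\alpha_ix_ax_n+\beta_ix_bx_n+c_ix_n^2$, where $\alpha_i,\beta_i$ are the coefficients of $x_a,x_b$ in $\lambda'_i$; these three vectors span $W$ as soon as the $3\times 3$ matrix with rows $(\alpha_i,\beta_i,c_i)$ is invertible, and modulo $W$ the remaining products $\lambda_ix_j$ with $x_j\in A'_1$ reduce to $\lambda'_ix_j$, which already span $A'_2$; hence $\sum_i\lambda_iA_1=A_2$. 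Invertibility can be arranged: the set of good triples for $\Gamma'$ is nonempty and open, hence dense, so we may choose $(\lambda'_i)$ in it so that $(\alpha_i,\beta_i)_{i=1,2,3}$ avoids the proper closed locus where all three vectors are pairwise parallel, after which suitable scalars $c_i$ make the determinant nonzero. The base case $n=3$ ($\Gamma$ the path $x_1-x_3-x_2$) is handled by $(x_1,x_2,x_3)$. This proves (a).

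For (b), a bipartite graph has no triangles, and by Observation~\ref{prime} $l_1=\sum x_i$, $l_2=\sum y_j$ is a system of parameters; by Observation~\ref{wlp}(b) it suffices to prove $R=R_\Gamma/(l_1,l_2)$ has WLP, i.e.\ by the reduction above that some $\lambda_3\in A_1$ satisfies $l_1A_1+l_2A_1+\lambda_3A_1=A_2$. We run the induction of the previous paragraph with these forms. Deleting the last vertex $w=x_n$, say $w\in X=\{x_1,\dots,x_k\}$ with $N(w)=\{y_a,y_b\}\subseteq Y$ (the case $w\in Y$ is symmetric), the sub-forms for $\Gamma'$ are $l'_1=\sum_{x_i\ne w}x_i$ and $l_2$ unchanged, and bipartiteness gives the crucial identity $l'_1w=0$, since $w$ and all remaining $x_i$ lie in the same part. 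With $\lambda_3=\lambda'_3+cw$ one computes $l_1w=w^2$, $l_2w=y_aw+y_bw$, and $\lambda_3w=\alpha y_aw+\beta y_bw+cw^2$ (with $\alpha,\beta$ the coefficients of $y_a,y_b$ in $\lambda'_3$); these span $W=\langle y_aw,\,y_bw,\,w^2\rangle$ whenever $\alpha\ne\beta$, as the corresponding determinant equals $\beta-\alpha$, and modulo $W$ the remaining products recover $l'_1A'_1+l_2A'_1+\lambda'_3A'_1=A'_2$. Since the inductive choice of $\lambda'_3$ ranges over a nonempty open (hence dense) subset of $A'_1$, we may take it with $\alpha\ne\beta$, and then $c=0$ works; the base case $n=3$ is trivial because there $\dim_k R_2=0$. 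Thus $R$ has WLP, and Observation~\ref{wlp}(b) yields a pair of exact zero-divisors.

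The one nontrivial point, common to both parts, is that after peeling off $x_n$ the block of three products with $x_n$ must span the three-dimensional space $W$, and making the associated $3\times 3$ matrix invertible requires an extra genericity condition on the lower-dimensional good forms — specifically on their coefficients at the two neighbors of $x_n$. This is possible precisely because the inductive ``good'' locus is open, hence dense in an irreducible parameter space. In (b) there is the further subtlety that the systems of parameters of $\Gamma$ and of $\Gamma'$ look incompatible; this evaporates once everything is computed inside the graded rings $R_{\Gamma'}\subseteq R_\Gamma$ rather than their Artinian quotients, together with the identity $l'_1w=0$.
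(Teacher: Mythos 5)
Your proof is correct, but it takes a genuinely different route from the paper. The paper's argument for (a) works directly with the kernel of $\cdot l \colon R_1 \to R_2$: it translates $l_1 f_1 + l_2 f_2 + l f = 0$ into a system of $e+n$ equations in the $3n$ coefficients of $f_1, f_2, f$, eliminates the $w_i$'s via the $x_i^2$-equations, and then uses the ordering hypothesis to solve for $(u_i, v_i)$ with $i \ge 3$ successively in terms of $(u_1,v_1,u_2,v_2)$ by Cramer's rule, concluding that the solution space is four-dimensional; part (b) is a direct verification that the bipartite choice of $l_1, l_2$ makes each $2\times 2$ determinant reduce to a nonvanishing condition like $a_{i_1}\ne a_{i_2}$. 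Your argument instead reformulates WLP as the spanning condition $\lambda_1 A_1 + \lambda_2 A_1 + \lambda_3 A_1 = A_2$ inside $A=R_\Gamma$ and proves this by induction on $n$, deleting the degree-two vertex $x_n$ and using $A_1 = A_1'\oplus kx_n$, $A_2 = A_2'\oplus W$; part (b) then drops out by tracking the specific forms through the same induction, with the single extra identity $l_1'w=0$ (forced by bipartiteness) reducing the $3\times 3$ determinant to $\beta-\alpha$. Your version makes the role of the peeling hypothesis more structurally transparent and isolates the bipartite computation cleanly; the paper's version is more elementary linear algebra but buries the graph structure in determinant bookkeeping. Two small places worth tightening in your write-up: the claim that a generic $(l_1,l_2)$ can be completed to a good triple should be justified by observing that the projection $(A_1)^3\to(A_1)^2$ is an open map, so the image of the nonempty open ``good'' locus is a nonempty open set of pairs which then meets the open set of parameter systems; and in (b) the inductive hypothesis should be phrased explicitly as ``the set of admissible $\lambda_3'$ is a nonempty Zariski-open subset of $A_1'$'' so that the density argument used to arrange $\alpha\ne\beta$ is available at every stage.
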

\begin{proof}
\rm{a}. The calculation of the Hilbert function of $R$ from Observation (\ref{calcul}) shows that the map $\cdot l:R_1 \rightarrow R_2$ has maximal number of generators if and only if it is surjective, which is equivalent to having one dimensional kernel.
  Fix $l_1=\sum_{i=1}^n \alpha_i x_i, l_2=\sum_{i=1}^n \beta _i x_i$, and $l=\sum_{i=1}^n a_i x_i$, where the coefficients $\alpha_i, \beta_i, a_i$ are generic in $k^{3n}$.
We consider the linear forms $f_1 =\sum_{i=1}^n u_i x_i, f_2=\sum_{i=1}^n v_i x_i$, and $f =\sum_{i=1}^n w_i x_i$ satisfying 
\begin{equation}\label{kernel}
l_1f_1+l_2f_2+lf=0 \ \mathrm{in}\ R_{\Gamma}
\end{equation}
Equation (\ref{kernel}) translates into a system of $e+n$ equations in $3n$ unknowns. The unknowns are the coefficients $u_i, v_i, w_i$ for $i=1, \ldots, n$, and we get one equation corresponding to each edge $\{x_i, x_j\}$ of $\Gamma$:
\begin{equation}\label{coeff1}
\alpha_ju_i + \alpha_i u_j+\beta_jv_i + \beta_i v_j + a_jw_i + a_i w_j=0,
\end{equation}
obtained by setting the coefficient of $x_ix_j$ in equation (\ref{kernel}) (these account for $2n-4$ equations), and one equation for each $i =1, \ldots, n$:
\begin{equation}\label{coeff2}
\alpha_iu_i+\beta_iv_i+a_iw_i=0,
\end{equation}
which is obtained by setting the coefficient of $x_i^2$ in equation (\ref{kernel}) equal to zero. 
We claim that if the coefficients $\alpha _i, \beta_i, a_i$ are chosen generically, then the vector space of solutions this system of linear equations is four dimensional. Equations (\ref{coeff2}) give $\displaystyle w_i = -\frac{\alpha_i}{a_i} u_i - \frac{\beta_i}{a_i} v_i$. Plugging this into the equations (\ref{coeff1}), we obtain $2n-4$ equations with $2n$ unknowns, of the form
\begin{equation}\label{newsystem}
\alpha_{ji}u_i+\alpha_{ij}u_j + \beta_{ji}v_i + \beta _{ij}v_j=0
\end{equation}
for each edge $\{x_i, x_j\}$ in $\Gamma$, where $$ \alpha_{ij}=\frac{\alpha_ia_j-\alpha_ja_i}{a_j}, \alpha_{ji}=\frac{\alpha_ja_i-\alpha_ia_j}{a_i}, \beta_{ij}=\frac{\beta_ia_j-\beta_ja_i}{a_j}, \beta_{ji}=\frac{\beta_ja_i-\beta_ia_j}{a_i}.$$
	By assumption $\{x_1, x_3\}$ and  $\{x_2, x_3\}$ are edges. The two equations corresponding to these edges involve 6 unknowns, $u_i, v_i$ for $i=1, 2, 3,$. The two equations in (\ref{newsystem}) corresponding to the edges $\{x_1, x_3\}, \{x_2, x_3\}$ allow us to solve for $u_3, v_3$ as linear combinations of $u_1, v_1, u_2, v_2$ (using Cramer's rule, provided the determinant $\alpha_{13}\beta_{23}-\beta_{13} \alpha_{23}$ is nonzero). Now let $i\ge 3$. By induction, we may assume that $u_j, v_j$ can be expressed as linear combinations of $u_1, v_1, u_2, v_2$ for all $j \le i-1$. By assumption,  there are two edges that connect $x_i$ to the set $\{x_1, \ldots, x_{i-1}\}$. Say that these edges are $\{x_{i_1}, x_i\}$ and $\{x_{i_2}, x_i\}$.  The equations in (\ref{newsystem}) corresponding to these edges allow us to solve for $u_i, v_i$ in terms of $u_{i_1}, v_{i_1}, u_{i_2}, v_{i_2}$ (using Cramer's rule, provided that the determinant $\alpha_{i_1i}\beta_{i_2i}-\beta_{i_11}\alpha_{i_2i}$ is nonzero), and therefore in terms of $u_1, v_1, u_2, v_2$ using the inductive hypothesis. It is immediate to see that the conditions 
\begin{equation}\label{det}
\alpha_{i_1i}\beta_{i_2i}-\beta_{i_1i}\alpha_{i_2i} \ne 0
\end{equation} translate into non-vanishing of certain non-trivial polynomials in $\alpha_i, \beta_i, a_i$, and thus there is a non-empty open set in $k^{3n}$ such that for any choice of $\alpha_i, \beta_i, a_i$ in this open set, the vector space of solutions of (\ref{kernel}) is four dimensional.

Now observe that three of the solutions of equation (\ref{kernel}) come from the Koszul relations on $l_1, l_2, l$, so $(f_1^1, f_2^1, f^1)=(-l_2, l_1, 0), (f_1^2, f_2^2, f^2)=(-l,  0, l_1), (f_1^3, f_2^3, f^3)=(0, -l, l_2)$ are linearly independent solutions. Let $(f_1^4, f_2^4, f^4)$ be such that $(f_1^j, f_2^j, f^j)$ where $j=1, \ldots, 4$ is a basis for the vector space of solutions of (\ref{kernel}). Consider the map $\phi$ given by multiplication by the image of $l:R_1 \rightarrow R_2$, where $R=R_{\Gamma}/(l_1, l_2)$. For a linear form $f \in R_{\Gamma}$, the image of $f$ is in the kernel of this map if and only if there exist $f_1, f_2 \in R_{\Gamma}$ such that $(f_1, f_2, f)$ is a solution to (\ref{kernel}). This implies that $f \in (l_1, l_2, f^4)$. Therefore, the kernel of the $\phi$ is one-dimensional, spanned by the image of $f^4$, and thus $\phi$ is surjective.

(b) We need to check that the choice of $l_1=\sum_{i=1}^k x_i, l_2=\sum_{j=1}^l y_j$ allows us to choose $l=\sum_{i=1}^k a_ix_i+\sum_{j=1}^l a_j' y_j$ such that the determinants in (\ref{det}) are non-zero. With notation as above, we have $\alpha_i=1, \beta_i=0$  for $1\le i \le k$, $\alpha_i=0, \beta_i=1$ for $k+1 \le i \le n$. The conditions (\ref{det}) need to be checked whenever $\{i_1, i\}$ and $\{i_2, i\}$ are edges of $\Gamma$. Due to the bipartite nature of the graph, this means that we have either $i_1, i_2 \le k$ and $i\ge k+1$, or $i_1, i_2 \ge k+1$ and $i \le k$. In the first case, we have  
$\displaystyle \alpha_{i_1i}=\frac{a_{i_1}}{a_i}, a_{i_2i}=\frac{a_{i_2}}{a_i}, \beta_{i_1i}=\beta_{i_2i}=0$, and (\ref{det}) becomes $a_{i_1}\ne a_{i_2}$. The second case is similar.

\end{proof}
Now we  give a condition for a bipartite graph $\Gamma$ that implies that the ring $R=R_{\Gamma}/(l_1, l_2)$ does not have exact zero divisors. This will be used in the next section to construct an example of a ring that has no exact zero divisors, but has non-free totally reflexive modules.
\begin{prop}\label{no_ezd}
Let $\Gamma$ be a bipartite graph with vertex set $V=\{x_1, \ldots, x_k, y_1, \ldots, y_l\}$, with $e=2n-4$. Assume that there exist $i \in \{1, \ldots, k\}$ and $j \in \{1, \ldots, l\}$ such that the subgraph induced on $V \, \backslash \{x_i, y_j\}$ is disconnected. Then $R=R_{\Gamma}/(l_1, l_2)$ does not have exact zero divisors.
\end{prop}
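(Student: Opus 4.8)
The plan is to argue by contradiction: suppose $R = R_\Gamma/(l_1,l_2)$ (with $l_1 = \sum x_i$, $l_2 = \sum y_j$ as in Observation~\ref{prime}) admits a pair of exact zero divisors $z, z'$. By Theorem~\ref{Yoshino}(1), both lie in $R_1$, and by Observation~\ref{wlp}(a) the multiplication map $\cdot z : R_1 \to R_2$ is surjective with one-dimensional kernel; moreover by Observation~\ref{prime}, writing $z = x + y$ with $x$ a combination of the $\overline{X}_i$ and $y$ a combination of the $\overline{Y}_j$, we get $z z' = 0$ where $z' = x - y$, so $\mathrm{Ann}_R(z) = (z')$ forces the kernel to be exactly $k\cdot z'$. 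The goal is to show that the disconnectedness hypothesis makes $\cdot z$ fail to be surjective for \emph{every} $z \in R_1$, contradicting Observation~\ref{wlp}(b) (which says WLP is equivalent to existence of exact zero divisors in this setting) — equivalently, to exhibit directly a linear form whose kernel under $\cdot z$ is larger than one-dimensional.

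The key structural input is a basis for $R_2$ indexed by the edges of $\Gamma$ (after reduction mod $l_1, l_2$): since $\overline{X}_i^2 = \overline{Y}_j^2 = 0$ and products of two $X$'s or two $Y$'s vanish, $R_2$ is spanned by the monomials $\overline{X}_i \overline{Y}_j$ for $\{x_i, y_j\} \in E$, and the two relations $(\sum \overline{X}_i)\overline{Y}_j = 0$, $(\sum \overline{Y}_j)\overline{X}_i = 0$ are the only ones — in fact working in the polynomial ring on $X_1,\dots,X_{k-1},Y_1,\dots,Y_{l-1}$ one checks $\dim_k R_2 = e - n + 1$, matching Observation~\ref{calcul}. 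Now let $a \in \{1,\dots,k\}$ and $b \in \{1,\dots,l\}$ be the vertices whose removal disconnects $\Gamma$, and let $W = V \setminus \{x_a, y_b\}$ split as a disjoint union $W = A \sqcup B$ of nonempty sets with no edges between $A$ and $B$. The plan is: given any $z = \sum_i c_i \overline{X}_i + \sum_j d_j \overline{Y}_j \in R_1$, I would construct a \emph{nonzero} linear form $w$ supported on the vertices of $A$ together with $\{x_a, y_b\}$, and lying in $\ker(\cdot z)$, by solving a homogeneous linear system; symmetrically one gets another solution supported on $B \cup \{x_a, y_b\}$. Because the $A$-part and $B$-part of the two solutions are independent (they live over disjoint vertex sets apart from the two "cut" vertices), and there are enough free parameters, $\ker(\cdot z)$ has dimension at least two for every $z$, so $\cdot z$ is never injective-modulo-$z'$ in the required way; hence no exact zero divisor exists.

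Concretely, to produce a killer of $z$ supported on $A^+ := A \cup \{x_a, y_b\}$: multiplying $z$ by $w = \sum_{x_i \in A^+} e_i \overline{X}_i + \sum_{y_j \in A^+} f_j \overline{Y}_j$ and collecting coefficients of each basis element $\overline{X}_i\overline{Y}_j$ of $R_2$, using $\overline{X}_i^2 = 0 = \overline{Y}_j^2$, gives one scalar equation per edge, plus we may use the relations $\sum \overline{X}_i = 0$, $\sum \overline{Y}_j = 0$ to reduce the count; the number of unknowns is $|A^+|$ and, since no edge joins $A$ to $B$, the edges that constrain $w$ are exactly those incident to $A^+$, of which there are at most $e$ minus (edges incident to $B$, counted carefully), and a dimension count shows the solution space is nonzero as long as $A$ is nonempty. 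Doing this on both sides of the cut and checking the two solution spaces intersect trivially or, better, span a space of dimension $\ge 2$, forces $\dim \ker(\cdot z) \ge 2$ for all $z$, contradicting Observation~\ref{wlp}(a).

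The main obstacle I anticipate is the bookkeeping in that last dimension count: one must verify that after imposing the two linear relations coming from $l_1, l_2$ and all the edge-equations, the solution space over $A^+$ genuinely has positive dimension \emph{regardless of the coefficients of $z$}, and that combining with the $B^+$ solution yields dimension at least two rather than the two solutions coinciding on the shared vertices $x_a, y_b$. A clean way to handle this is to first treat the case $z = x + y$ in the normalized form of Observation~\ref{prime} (so that $z' = x - y$ is forced) and show $z'$ alone cannot span $\ker(\cdot z)$: exhibit a second element of the kernel that is \emph{not} a scalar multiple of $z'$ by exploiting that $z'$ is "connected" across the cut while the constructed solution is not. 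That asymmetry — a genuine exact zero divisor's annihilator generator must be supported on all of $\Gamma$ in a linked way, whereas the cut lets us build an annihilator living only on one side — is the conceptual heart of the argument.
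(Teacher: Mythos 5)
Your overall strategy is sound: you correctly reduce the problem to showing that $\ker(\cdot z : R_1 \to R_2)$ has dimension at least $2$ for every linear form $z$, and you correctly identify the cut $W = A \sqcup B$ as the structural input. But the mechanism you propose for actually producing a second annihilator --- solving a homogeneous linear system over the variables supported on $A^+ = A \cup \{x_a, y_b\}$ and then a parallel one over $B^+$, followed by a dimension count --- is precisely the step you flag as the ``main obstacle,'' and you do not carry it out. As written, the proposal leaves open whether the $A^+$-supported system has a nontrivial solution for \emph{arbitrary} coefficients of $z$, and whether that solution is independent of $z'$; both are essential and neither is established.

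The paper closes exactly this gap with a much shorter observation that your last paragraph circles around but never quite lands on. Normalize so the removed vertices are $x_k, y_l$; then $\mathfrak{m}$ is generated by the images of the remaining variables, which partition into ideals $\mathfrak{a}$ (variables indexed by $A$) and $\mathfrak{b}$ (variables indexed by $B$) with $\mathfrak{a}\mathfrak{b} = 0$ and $\mathfrak{a} + \mathfrak{b} = \mathfrak{m}$. Now take \emph{the annihilator you already have}, namely $z' = x - y$ from Observation~\ref{prime}, and split it as $z' = z'_{\mathfrak{a}} + z'_{\mathfrak{b}}$ according to this partition. Writing $z = z_{\mathfrak{a}} + z_{\mathfrak{b}}$ in the same way, one computes
\[
z \, z'_{\mathfrak{a}} = z_{\mathfrak{a}}\,(z_{\mathfrak{a}})' + z_{\mathfrak{b}}\,(z_{\mathfrak{a}})' = 0,
\]
where the first term vanishes by the identity $uu'=0$ of Observation~\ref{prime} applied to $z_{\mathfrak{a}}$, and the second vanishes since $\mathfrak{a}\mathfrak{b} = 0$; similarly $z\, z'_{\mathfrak{b}} = 0$. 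So $z'_{\mathfrak{a}}$ and $z'_{\mathfrak{b}}$ both lie in $\mathrm{Ann}(z)$, and they have disjoint variable support, so $\mathrm{Ann}(z)_1$ is at least $2$-dimensional (handling the degenerate case where one of $z'_{\mathfrak{a}}, z'_{\mathfrak{b}}$ vanishes by noting that then $z$ lies entirely in $\mathfrak{a}$ or $\mathfrak{b}$ and hence annihilates all of the other, which is nonzero). There is no linear-system bookkeeping needed; the decomposition of $z'$ itself does all the work. I'd encourage you to replace your dimension-count plan with this splitting argument, which is what turns your correct intuition into a complete proof.
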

\begin{proof}
Without loss of generality, we may assume $i=k, j=l$. Then the maximal ideal of $R$ is generated by the images  $\overline{X}_1, \ldots, \overline{X}_{k-1}, \overline{Y}_1, \ldots, \overline{Y}_{l-1}$ of the variables corresponding to the vertices in $V \, \backslash \{x_k, y_l\}$  . Since the graph induced on $V \, \backslash \{x_k, y_l\}$ is disconnected, we may partition the set of vertices into disjoint sets $A, B$ such that there is no edge connecting any vertex of $A$ to any vertex of $B$. Let $\mathfrak{a}, \mathfrak{b}$ denote the ideals of $R$ generated by the images of the variables corresponding to vertices in $A$ and $B$ respectively. Then we have $\mathfrak{a} \mathfrak{b}=0$, and $\mathfrak{a}+ \mathfrak{b}=\mathfrak{m}$. Assume that $u, v$ is a pair of exact zero divisors consisting of linear elements in $R$. Then we can write $u=u_{\mathfrak{a}}+u_{\mathfrak{b}}$ with $u_{\mathfrak{a}} \in \mathfrak{a}$ and $u_{\mathfrak{b}} \in \mathfrak{b}$. Using the notation from Observation (\ref{prime}), 
we also have $u'_{\mathfrak{a}} \in \mathfrak{a}$, $u'_{\mathfrak{b}} \in \mathfrak{b}$. Then we have
$u u'_{\mathfrak{a}}= u_{\mathfrak{a}} u'_{\mathfrak{a}} + u_{\mathfrak{b}} u'_{\mathfrak{a}}=0$, where the first term is zero from (\ref{zero}), and the second term is zero because $\mathfrak{a} \mathfrak{b}=0$. Similarly, we have $u u'_{\mathfrak{b}}=0$. This shows that $u$ cannot be part of a pair of exact zero-divisors. 

\end{proof}

\begin{obs}\label{no_totally_reflexive}
If we replace the property that $\m=\mathfrak{a}+\mathfrak{b}$ and $\mathfrak{a} \mathfrak{b}=0$ from the proof of  Proposition (\ref{no_ezd}) with the stronger condition that there are non-zero ideals
 $\mathfrak{a}, \mathfrak{b}\subseteq \mathfrak{m}$ such that $\mathfrak{m}=\mathfrak{a}\oplus \mathfrak{b}$, then the ring $R$ is a fiber product, and it follows by \cite{SS} that it does not have any non-free totally reflexive modules unless it is Gorenstein. This holds without the assumption that $\m ^3=0$, but we provide a more direct proof for this case below.
\end{obs}
\begin{proof}
Assume that $R$ is not Gorenstein and has non-free totally reflexive modules.
Condition (1) from Theorem (\ref{Yoshino}) tells us that we may assume $\nu(\m) \ge 3$.
We know from Theorem (\ref{Yoshino}) that the resolution of a totally reflexive module must have constant Betti numbers $B$ and matrices $D_i$ consisting of linear forms in $R$. 
We can write $D_i = D_{i, \mathfrak{a}} + D_{i, \mathfrak{b}}$ where $D_{i, \mathfrak{a}}$ has entries in $\mathfrak{a}$, and $D_{i, \mathfrak{b}}$ has entries in $\mathfrak{b}$. 
Every vector $\mathfrak{u}$ with entries in $\mathfrak{m}$ can be written as $u_{\mathfrak{a}}+ u_{\mathfrak{b}}$, where $u_{\mathfrak{a}}$ has entries in $\mathfrak{a}$ and $u_{\mathfrak{b}}$ has entries in $\mathfrak{b}$. 
Note that
$$ D_i u = D_{i, \mathfrak{a}}u_{\mathfrak{a}} + D_{i, \mathfrak{b}}u_{\mathfrak{b}},
$$
and $u \in \mathrm{\ker}(D_i) \Leftrightarrow u_{\mathfrak{a}} \in \mathrm{ker}(D_{i, \mathfrak{a}})$ and $u_{\mathfrak{b}} \in \mathrm{ker}(D_{i, \mathfrak{b}})\Leftrightarrow u_{\mathfrak{a}}, u_{\mathfrak{b}} \in \mathrm{ker}(D_i)$. Since the columns of $D_{i+1}$ span $\mathrm{ker}(D_i)$, it follows that we can write $D_{i+1}$ as a matrix in which every column has either all entries in $\mathfrak{a}$ or all entries in $\mathfrak{b}$. Say that there are $n_i$ columns of the first type, and $n'_i$ columns of the second type, where $n_i + n'_i=B$. The columns of $D_{i+1}$ that have all entries in $\mathfrak{a}$ are annihilated by evey element of $\mathfrak{b}$, and the columns of $D_{i+1}$ that have all entries in $\mathfrak{a}$ are annihilated by every element of $\mathfrak{b}$. Say that $\nu (\mathfrak{a})= a$ and $\nu (\mathfrak{b})=b$. Then we have $bn_i +an'_i$ linearly independent relations on the columns of $D_{i+1}$ described in the previous sentence. It follows that $B\ge bn_i + an'_i$. Since $n_i + n'_i=B$, and $a, b \ge 1$,  this is only possible if $a=b=1$. This would contradict the assumption that $\nu(\mathfrak{m})\ge 3$.

\end{proof}
Examples of rings $R$ satisfying the hypothesis in Observation~\ref{no_totally_reflexive} are obtained from bipartite graphs $\Gamma$ that satisfy the assumption in Proposition ~\ref{no_ezd} for some $i, j$ , and also have the property that $x_i$ is connected to all of $y_1, \ldots, y_l$, and $y_j$ is connected to all of $x_1, \ldots, x_k$.

\begin{obs}
Given a bipartite graph $\Gamma$ that satisfies $e=2n-4$, one is led to wonder whether one of the hypothesis in Proposition (\ref{exist_ezd}) or the hypothesis in Proposition (\ref{no_ezd}) must hold. We have not been able to establish this or find a counterexample.
\end{obs}

\section{An example with no exact zero divisors}
In this section we study an example of a ring $(R, \mathfrak{m})$ with $\m^3=0$ such that $\mathfrak{m}=\mathfrak{a}+\mathfrak{b}$ for two ideals $\mathfrak{a}, \mathfrak{b}\subseteq \mathfrak{m}$ which satisfy $\mathfrak{a}\mathfrak{b}=(0)$, but $\mathfrak{a} \cap \mathfrak{b}\ne (0)$. Proposition (\ref{no_ezd}) can be applied to show that this ring does not have exact zero divisors. We will give a construction that produces infinitely many non-isomorphic indecomposable totally reflexive modules over this ring. It is theoretically known for a non-Gorenstein ring that if it has one non-free totally reflexive module, then it must have infinitely many non-isomorphic indecomposable totally reflexive modules;  see \cite{CPST}.  However, most concrete constructions that give rise to infinitely many such modules rely on the existence of a pair of exact zero divisors;  see \cite{CJRSW}, \cite{Tr}. The example we study here shows how such a construction can be achieved in the absence of exact zero divisors.

Throughout this section, $R$ will denote the ring described below.
\begin{construction}\label{part1}
Let $\Gamma$ be the bipartite graph with vertices
\newline $\{x_1, \cdots , x_5, y_1, \cdots, y_5\}$ and edges
$\{x_1, y_1\}, \{x_1, y_2\}, \{x_2, y_1\}, \{x_2, y_2\}, \{x_3, y_3\}$, $\{x_3, y_4\}$, $ \{x_4, y_3\}$, $\{x_4, y_4\}$, and
$\{x_i, y_5\}, \{x_5, y_j\}$ for all $i, j \in \{1, 2, 3, 4\}$.
\end{construction}

\begin{center}
\begin{tikzpicture}[scale=1.0]
\node at (0, 0) (i) [circle, draw, scale=0.5, fill=white] {$x_4$}; 
\node at (1.5, 0) (j) [circle, draw, scale=0.5,  fill=white] {$y_4$};
\node at (1.5, 1.5) (ell) [circle, draw, scale=0.5, fill=white] {$y_3$};
\node at (0, 1.5) (m) [circle, draw, scale=0.5, fill=white] {$x_3$};
\node at (0, 3) (a) [circle, draw, scale=0.5,   fill=white] {$x_2$}; 
\node at (1.5, 3) (b) [circle, draw, scale=0.5,   fill=white] {$y_2$};
\node at (1.5, 4.5) (c) [circle, draw, scale=0.5,   fill=white] {$y_1$};
\node at (0, 4.5) (d) [circle, draw, scale=0.5,  fill=white] {$x_1$};
\node at (-1.5, 2.25) (x) [circle, draw, scale=0.5,  fill=white] {$x_5$};
\node at (3, 2.25) (y) [circle, draw, scale=0.5,  fill=white] {$y_5$};
\draw [-, ultra thick] (a) to (b);
\draw [-, ultra thick] (a) to (c);
\draw [-, ultra thick] (d) to (b);
\draw [-, ultra thick] (d) to (c);
\draw [-, ultra thick] (i) to (j);
\draw [-, ultra thick] (i) to (ell);
\draw [-, ultra thick] (m) to (j);
\draw [-, ultra thick] (m) to (ell);
\draw [-, ultra thick] (x) to (a);
\draw [-, ultra thick] (x) to (d);
\draw [-, ultra thick] (x) to (i);
\draw [-, ultra thick] (x) to (m);
\draw [-, ultra thick] (y) to (b);
\draw [-, ultra thick] (y) to (c);
\draw [-, ultra thick] (y) to (ell);
\draw [-, ultra thick] (y) to (j);
\end{tikzpicture}
\end{center}

Note that removing the vertices $x_5, y_5$ yields a disconnected graph, with connected components $\{x_1, x_2, y_1, y_2\}$ and $\{x_3, y_3, x_4, y_4\}$ (which are complete  bipartite subgraphs). 

The ring $R=R_{\Gamma}/(l_1, l_2)$, where $l_1 = \sum_{i=1}^5 x_i, l_2=\sum_{j=1}^5 y_j$ is 
$$
R=\frac{k[x_1, \cdots, x_4, y_1, \cdots, y_4]}{(x_1, \cdots , x_4)^2 + (y_1, \cdots, y_4)^2 +I},
$$
where $I=(x_1, x_2)(y_3, y_4)+ (x_3, x_4)(y_1, y_2) + ((\sum_{i=1}^4 x_i)(\sum_{j=1}^4y_j)).$
Proposition (\ref{no_ezd}) shows that $R$ does not have exact zero divisors.

Letting $\mathfrak{a}:=(x_1, x_2, y_1, y_2)$ and $\mathfrak{b}:=(x_3, x_4, y_3, y_4)$, we have 
\begin{equation}\label{properties} 
\mathfrak{m}=\mathfrak{a}+\mathfrak{b}, \mathfrak{a}\mathfrak{b}=(0), \mathrm{and} \  \mathfrak{a} \cap \mathfrak{b} = (\delta), 
\end{equation}
where $\delta = (\sum_{i=1}^4 x_i)(\sum_{j=1}^4 y_j)$. The number of vertices of $\Gamma$ is 10 and the number of edges is 16, so the requirement $e=2n-4$ is satisfied. This means that $\mathrm{dim}_k(R_2)=\mathrm{dim}_k(R_1)-1$.

We let $\mathfrak{a}_i$ denote the vector space spanned by monomials of degree $i$ in $x_1, x_2, y_1, y_2$, and $\mathfrak{b}_i$ the vector space spanned by the monomials of degree $i$ in $x_3, x_4, y_3, y_4$ for $i=1, 2$. 

\begin{obs}\label{conditions}
Let $A_0, B_0$ denote $2 \times 2$  matrices of linear forms such that the entries of $A_0$ are in $\mathfrak{a}$ and the entries of $B_0$ are in $\mathfrak{b}$. Assume that the maps $\tilde{A_0}:(\mathfrak{a}_1)^2 \rightarrow (\mathfrak{a}_2)^2$  induced by multiplication by $A_0$ and $\tilde{B_0}: (\mathfrak{b}_1)^2 \rightarrow (\mathfrak{b}_2)^2$ induced by multiplication by $B_0$ are injective.

Consider the map $\tilde{A_0}+\tilde{B_0}: (R_1)^2 \rightarrow (R_2)^2$.
Then $\mathrm{ker}(\tilde{A_0}+\tilde{B_0})$ is generated by two vectors ${\bf c}_1+{\bf d_1}$ and ${\bf c}_2 + {\bf d}_2$ with linear entries,where ${\bf c}_1, {\bf c}_2$ have entries in $\mathfrak{a}$, and ${\bf d}_1, {\bf d}_2$ have entries in $\mathfrak{b}$.

Let $A_1$, $B_1$ denote the matrices with columns ${\bf c}_1, {\bf c}_2$ and ${\bf d}_1, {\bf d}_2$ respectively. If the maps $\tilde{A_1}: (\mathfrak{a}_1)^2 \rightarrow (\mathfrak{a}_2)^2, \tilde{B_1}:(\mathfrak{b}_1)^2 \rightarrow (\mathfrak{b}_2)^2$ are also injective, then we have an exact complex
\begin{equation}\label{exact_complex}
R^ 2 \stackrel{A_1+B_1}{\longrightarrow} R^2 \stackrel{A_0+B_0}{\longrightarrow}R^2.
\end{equation}

\end{obs}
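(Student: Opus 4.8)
The plan is to reduce the whole statement to linear algebra in the finite-dimensional pieces $R_1,R_2$, using the splitting in (\ref{properties}). First I would record the dimensions. By (\ref{properties}) the ideals $\mathfrak{a},\mathfrak{b}$ are homogeneous with $\mathfrak{a}\cap\mathfrak{b}=(\delta)$, $\delta\in\mathfrak{m}^2$ and $\mathfrak{m}^3=0$; hence $\mathfrak{a}_1\cap\mathfrak{b}_1=0$ and $\mathfrak{a}_2\cap\mathfrak{b}_2=k\delta$, so that $R_1=\mathfrak{a}_1\oplus\mathfrak{b}_1$ and $R_2=\mathfrak{a}_2+\mathfrak{b}_2$, with $\dim_k\mathfrak{a}_1=\dim_k\mathfrak{b}_1=\dim_k\mathfrak{a}_2=\dim_k\mathfrak{b}_2=4$. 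Because $\mathfrak{a}\mathfrak{b}=0$, for $u=u^{\mathfrak{a}}+u^{\mathfrak{b}}\in (R_1)^2=(\mathfrak{a}_1)^2\oplus(\mathfrak{b}_1)^2$ one gets $(A_0+B_0)u=\tilde{A_0}(u^{\mathfrak{a}})+\tilde{B_0}(u^{\mathfrak{b}})$, the two summands lying in $(\mathfrak{a}_2)^2$ and $(\mathfrak{b}_2)^2$ respectively, and similarly for $A_1,B_1$. Since $\tilde{A_0},\tilde{B_0}$ are injective $k$-linear maps between spaces of equal dimension $8$, they are isomorphisms $(\mathfrak{a}_1)^2\xrightarrow{\sim}(\mathfrak{a}_2)^2$ and $(\mathfrak{b}_1)^2\xrightarrow{\sim}(\mathfrak{b}_2)^2$.

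For the first assertion, $u\in\ker(\tilde{A_0}+\tilde{B_0})$ precisely when $\tilde{A_0}(u^{\mathfrak{a}})=-\tilde{B_0}(u^{\mathfrak{b}})$, and this common value then lies in $(\mathfrak{a}_2)^2\cap(\mathfrak{b}_2)^2=(\mathfrak{a}_2\cap\mathfrak{b}_2)^2=(k\delta)^2$, a two-dimensional space. Conversely every element of $(k\delta)^2$ arises this way from a unique pair $(u^{\mathfrak{a}},u^{\mathfrak{b}})$, since $\tilde{A_0},\tilde{B_0}$ are bijective onto their targets; thus $\ker(\tilde{A_0}+\tilde{B_0})$ is the graph of a linear map on the two-dimensional space $\tilde{A_0}^{-1}\big((k\delta)^2\big)$, hence is itself two-dimensional. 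Choosing a basis $\mathbf{c}_1,\mathbf{c}_2$ of $\tilde{A_0}^{-1}\big((k\delta)^2\big)\subseteq(\mathfrak{a}_1)^2$ and setting $\mathbf{d}_i:=-\tilde{B_0}^{-1}\big(\tilde{A_0}(\mathbf{c}_i)\big)\in(\mathfrak{b}_1)^2$ yields generators $\mathbf{c}_1+\mathbf{d}_1,\mathbf{c}_2+\mathbf{d}_2$ of the required form, all with linear entries.

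For the second assertion I would first note that (\ref{exact_complex}) is a complex: the columns $\mathbf{c}_i+\mathbf{d}_i$ of $A_1+B_1$ lie in $\ker(\tilde{A_0}+\tilde{B_0})$, so the displayed identity gives $(A_0+B_0)(\mathbf{c}_i+\mathbf{d}_i)=\tilde{A_0}(\mathbf{c}_i)+\tilde{B_0}(\mathbf{d}_i)=0$. For exactness at the middle $R^2$, observe that since all entries of $A_0+B_0$ and $A_1+B_1$ are linear forms, both maps are homogeneous, so $\ker(A_0+B_0)$ and $\operatorname{im}(A_1+B_1)$ are graded submodules of $R^2$; using $R=k\oplus R_1\oplus R_2$ it suffices to compare their graded components in degrees $0,1,2$. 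In degree $2$ the kernel is all of $(R_2)^2$ (the next map is zero), while the image is $(A_1+B_1)\big((R_1)^2\big)\supseteq\tilde{A_1}\big((\mathfrak{a}_1)^2\big)+\tilde{B_1}\big((\mathfrak{b}_1)^2\big)=(\mathfrak{a}_2)^2+(\mathfrak{b}_2)^2=(R_2)^2$, where the equalities use that $\tilde{A_1},\tilde{B_1}$ are injective hence (by the dimension count) onto $(\mathfrak{a}_2)^2,(\mathfrak{b}_2)^2$ — this is exactly where the extra injectivity hypothesis on $A_1,B_1$ is used. In degree $1$ the kernel is $\ker(\tilde{A_0}+\tilde{B_0})$, which by the first part is the span of $\mathbf{c}_1+\mathbf{d}_1,\mathbf{c}_2+\mathbf{d}_2$, and this is precisely $(A_1+B_1)(k^2)$, the image in that degree. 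In degree $0$ the image is $0$ (the entries of $A_1+B_1$ lie in $\mathfrak{m}$), and the kernel is also $0$: if $c_1,c_2\in k$ satisfy $c_1v_1+c_2v_2=0$ where $v_1,v_2$ are the columns of $A_0+B_0$, then taking $\mathfrak{a}$-components and multiplying by any nonzero $z\in\mathfrak{a}_1$ shows $(c_1z,c_2z)\in\ker\tilde{A_0}$, forcing $(c_1,c_2)=0$ by injectivity of $\tilde{A_0}$.

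Most of this is bookkeeping once the framework is in place; the only real content is the dimension count that upgrades the injectivity hypotheses to isomorphisms $\tilde{A_0},\tilde{B_0}$ (and later $\tilde{A_1},\tilde{B_1}$), together with the identification $(\mathfrak{a}_2)^2\cap(\mathfrak{b}_2)^2=(k\delta)^2$. I expect the fussiest step to be the degree-zero exactness, where one must rule out a nontrivial $k$-linear dependence among the columns of $A_0+B_0$; everything else follows cleanly from the $\mathfrak{a}$/$\mathfrak{b}$ decomposition and the relation $\mathfrak{a}\mathfrak{b}=0$.
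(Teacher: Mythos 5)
Your proof is correct and follows essentially the same route as the paper's: both use the dimension count $\dim_k\mathfrak{a}_i=\dim_k\mathfrak{b}_i=4$ to promote injectivity of $\tilde{A_0},\tilde{B_0}$ to bijectivity, identify $\ker(\tilde{A_0}+\tilde{B_0})$ via the 1-dimensional overlap $\mathfrak{a}_2\cap\mathfrak{b}_2=k\delta$, and then deduce surjectivity in degree two (the paper via the Betti-number dimension count $\dim(R_1)^2=\dim(R_2)^2+2$, you via $\tilde{A_1}((\mathfrak{a}_1)^2)+\tilde{B_1}((\mathfrak{b}_1)^2)=(\mathfrak{a}_2)^2+(\mathfrak{b}_2)^2=(R_2)^2$, which amounts to the same thing). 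The only addition is your explicit degree-zero check that the columns of $A_0+B_0$ are $k$-linearly independent, which the paper leaves implicit; that is a worthwhile detail but not a different argument.
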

{\bf Note:} We view $\tilde{A_0}, \tilde{B_0}$, etc. as maps of vector spaces, and $A_0, B_0, $ etc. as maps of free $R$-modules.
\begin{proof} 
Note that $\mathfrak{a}_i, \mathfrak{b}_i$ have vector space dimension 4 for $i=1, 2$. Therefore the injectivity assumption implies that $\tilde{A_0}, \tilde{B_0}$ are bijective.
An arbitrary vector in $R^2$ with entries consisting of linear forms can be written as ${\bf c} + {\bf d}$, with  ${\bf c}\in (\mathfrak{a}_1)^2$ and ${\bf d }\in (\mathfrak{b}_1)^2$. Since $A_0 {\bf d}=B_0{\bf c}=0$, we have 
$$
{\bf c} + {\bf d} \in \mathrm{ker}(A_0+B_0) \Leftrightarrow A_0{\bf c} =-B_0{\bf d},
$$
and if that is the case, then the entries of $A_0{\bf c}$ and $B_0{\bf d}$ must be in $(\delta)$, and we have
$$
A_0{\bf c}=-B_0{\bf d}=\left(\begin{array}{c} \alpha \delta \\ \beta \delta \\ \end{array}\right)
$$
with $\alpha, \beta \in k$.
The injectivity assumptions imply that there are unique ${\bf c}_1, {\bf c}_2, {\bf d}_1, {\bf d}_2$ such that 
\begin{equation}\label{Cramer}
A_0{\bf c}_1 =- B_0{\bf d}_1 = \left( \begin{array}{c} \delta \\ 0 \\ \end{array} \right),  \ \ \ \ \ A_0{\bf c}_2 = -B_0{\bf d}_2 = \left( \begin{array}{c} 0 \\ \delta \\ \end{array}\right)
\end{equation}
It is now easy to check that $\mathrm{ker}(\tilde{A_0}+\tilde{B_0})$ is spanned by ${\bf c}_1 +{\bf d}_1, {\bf c}_2 + {\bf d}_2$.

It is clear from construction that (\ref{exact_complex}) is a complex. Recall that $\mathrm{dim}_k(R_2)=\mathrm{dim}_k(R_1)-1$. As above, the injectivity assumptions for $\tilde{A_1}$ and $\tilde{B_1}$ imply that $\tilde{A_1}+\tilde{B_1}:(R_1)^2 \rightarrow (R_2)^2$ has a two dimensional kernel. Since $\mathrm{dim}_k((R_1)^{\oplus 2})=\mathrm{dim}_k((R_2)^{\oplus 2})+2$, it follows that $\tilde{A_1}+\tilde{B_1}$ is surjective. On the other hand, $\mathrm{ker}(A_0+B_0)$ consists of $\mathrm{ker}(\tilde{A_0}+\tilde{B_0})$ in degree one, and all of $R_2^{\oplus 2}$ in degree two. Therefore the surjectivity of $\tilde{A_1}+\tilde{B_1}$, together with the fact that the image of $A_1+B_1$ contains the kernel of $\tilde{A_0}+\tilde{B_0}$  by construction show the exactness of (\ref{exact_complex}).
\end{proof}

\begin{obs}\label{part2}
Assume that there is a doubly infinite sequence of $2\times 2$ matrices $A_n, B_n$ for $n\in {\bf Z}$ with the entries of $A_n$ in $\mathfrak{a}_1$ and the entries of $B_n$ in $\mathfrak{b}_1$, such that $\tilde{A}_n, \tilde{A}_n^t:(\mathfrak{a}_1)^2 \rightarrow (\mathfrak{a}_2)^2$ and $\tilde{B}_n, \tilde{B}_n^t:(\mathfrak{b}_1)^2 \rightarrow (\mathfrak{b}_2)^2$ are injective maps, and $(A_n +B_n)(A_{n+1}+B_{n+1})=0$ for all $n \in {\bf Z}$.

Then we have a doubly infinite acyclic complex
$$
\mathcal {F}_{\cdot} \ \ \ \ \ \ \ \ \ \ \cdots R^2 \stackrel{A_{n+1} + B_{n+1}}{\longrightarrow} R^2 \stackrel{A_n+B_n}{\longrightarrow} R^2 \stackrel{A_{n-1}+B_{n-1}}{\longrightarrow} \cdots 
$$
whose dual is also acyclic. Any cokernel module in $\mathcal{F}_{\cdot}$ will be a non-free totally reflexive $R$-module.
\end{obs}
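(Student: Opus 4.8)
The plan is to verify, for each integer $n$, the three conditions needed for $\mathcal F_\cdot$ to be a totally acyclic complex: that $\mathcal F_\cdot$ is a complex, that it is exact at every spot, and that its dual $\mathcal F_\cdot^*$ is exact at every spot. The first condition is given outright by the hypothesis $(A_n+B_n)(A_{n+1}+B_{n+1})=0$. For exactness, the key observation is that the situation at each spot
$$
R^2 \stackrel{A_{n+1}+B_{n+1}}{\lar} R^2 \stackrel{A_n+B_n}{\lar} R^2
$$
is exactly the setup of Observation~\ref{conditions}, provided we know that the columns of $A_{n+1}$ span the kernel of $\tilde A_n$ (as maps on $(\mathfrak a_1)^2 \to (\mathfrak a_2)^2$) and likewise the columns of $B_{n+1}$ span $\ker(\tilde B_n)$, and that $\tilde A_{n+1}, \tilde B_{n+1}$ are injective. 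The injectivity of all the maps $\tilde A_n, \tilde B_n$ is part of the hypothesis; the remaining point is the spanning statement, which will follow from a dimension count as in the proof of Observation~\ref{conditions}: since $\dim_k(\mathfrak a_1)=\dim_k(\mathfrak a_2)=4$ and $\tilde A_n$ is injective hence bijective, $\ker(\tilde A_n)$ in the relevant graded piece has the right dimension, and the argument producing ${\bf c}_1,{\bf c}_2$ via (\ref{Cramer}) shows these are the kernel of $\tilde A_n + \tilde B_n$; one then applies Observation~\ref{conditions} directly with $A_0,B_0$ replaced by $A_n,B_n$ and $A_1,B_1$ by $A_{n+1},B_{n+1}$ to get exactness of $\mathcal F_\cdot$ at the middle spot, for every $n$.

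For the dual complex, the point is that $(A_n+B_n)^t = A_n^t + B_n^t$, the entries of $A_n^t$ still lie in $\mathfrak a_1$ and those of $B_n^t$ still lie in $\mathfrak b_1$, and by hypothesis $\tilde A_n^t, \tilde B_n^t$ are also injective. Moreover, transposing $(A_n+B_n)(A_{n+1}+B_{n+1})=0$ gives $(A_{n+1}+B_{n+1})^t(A_n+B_n)^t=0$, so $\mathcal F_\cdot^*$ is again a doubly infinite sequence of exactly the same type — a complex of $R^2$'s whose differentials split into an $\mathfrak a$-part and a $\mathfrak b$-part, all of whose associated vector-space maps and their transposes are injective. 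Hence Observation~\ref{conditions} applies verbatim to $\mathcal F_\cdot^*$ (after re-indexing), yielding its exactness at every spot. Therefore $\mathcal F_\cdot$ is a minimal totally acyclic complex, and by the definition of totally reflexive module, every cokernel $\mathrm{Coker}(A_{n+1}+B_{n+1} \to R^2)$ appearing in $\mathcal F_\cdot$ is a totally reflexive $R$-module; it is non-free because the differentials have entries in $\mathfrak m$ (the complex is minimal) and $\mathcal F_\cdot$ is infinite in both directions, so no cokernel can be free.

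The main obstacle is bookkeeping rather than a genuine difficulty: one must be careful that Observation~\ref{conditions} as stated is phrased for a three-term complex with a specific pair of consecutive differentials, and check that its hypotheses (injectivity of all four relevant vector-space maps $\tilde A_n, \tilde B_n, \tilde A_{n+1}, \tilde B_{n+1}$) are precisely what the hypothesis of the present statement supplies at each $n$. The only subtlety worth spelling out is why the columns of $A_{n+1}$ (resp.\ $B_{n+1}$) must span $\ker \tilde A_n$ (resp.\ $\ker \tilde B_n$) and not merely be contained in it: this is where one invokes that $\tilde A_{n+1}+\tilde B_{n+1}:(R_1)^2\to(R_2)^2$ has a two-dimensional kernel (forced by injectivity of $\tilde A_{n+1},\tilde B_{n+1}$ together with the dimension identity $\dim_k R_2 = \dim_k R_1 - 1$), which together with the complex condition pins down the image of $A_{n+1}+B_{n+1}$ as exactly $\ker(\tilde A_n+\tilde B_n)$ in degree one and all of $R_2^{\oplus 2}$ in degree two — the same dichotomy used in the last paragraph of the proof of Observation~\ref{conditions}.
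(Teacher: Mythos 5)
Your proof is correct and follows the same strategy as the paper's: reduce everything to Observation~\ref{conditions}, applied at each spot for acyclicity and applied to the transposed matrices for the dual; the paper's own proof is essentially the same two-sentence reduction.

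One small slip worth correcting: you write that the columns of $A_{n+1}$ (resp.\ $B_{n+1}$) must span $\ker\tilde{A}_n$ (resp.\ $\ker\tilde{B}_n$), but $\tilde{A}_n$ and $\tilde{B}_n$ are injective by hypothesis, so those kernels are zero; what is actually needed is that the columns of $A_{n+1}+B_{n+1}$ span the two-dimensional space $\ker(\tilde{A}_n+\tilde{B}_n)$. Moreover, your stated justification for this — that $\ker(\tilde{A}_{n+1}+\tilde{B}_{n+1})$ is two-dimensional — concerns the wrong graded piece: that kernel lives in degree one of the source, whereas the span of the columns is the image of $k^{\oplus 2}$ in degree zero, so its dimension is not controlled by that kernel. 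The correct reason that the columns of $A_{n+1}+B_{n+1}$ are linearly independent (hence span the two-dimensional kernel of $\tilde{A}_n+\tilde{B}_n$ once the complex condition places them inside it) is the injectivity of $\tilde{A}_{n+1}$ itself: if $\alpha\,\mathbf{c}_1+\beta\,\mathbf{c}_2=0$ for the columns $\mathbf{c}_1,\mathbf{c}_2$ of $A_{n+1}$, then for any nonzero $z\in\mathfrak{a}_1$ the vector $(\alpha z,\beta z)^t$ lies in $\ker\tilde{A}_{n+1}$, forcing $\alpha=\beta=0$; since $R_1=\mathfrak{a}_1\oplus\mathfrak{b}_1$, independence of the $\mathfrak{a}$-parts already gives independence of the full columns. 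With that repair the argument is complete and matches the paper's intent.
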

\begin{proof}
The acyclicity of the complex ${\mathcal F}_{\cdot}$ was proved in Observation (\ref{conditions}). In order to see that the dual is also acyclic, note that Observation (\ref{conditions}) applies to $A_{n+1}^t, B_{n+1}^t$ used in the roles of $A, B$, and therefore the kernel of $\tilde{A}_{n+1}^t+\tilde{B}_{n+1}^t$ is spanned by two vectors with linear entries. Since we know $(A_{n+1}^t+B_{n+1}^t)(A_n^t + B_n^t)=0$, it follows that  $A_n^t, B_n^t$  can be used in the roles of $A_1, B_1$.
\end{proof}

\begin{construction}\label{part3}
Now we provide an explicit construction that satisfies all the required conditions in Observation~(\ref{part2}). Let
$$
A_n=\left(\begin{array}{cc} x_1+x_2+y_1+y_2 & x_1-x_2+y_1-y_2 \\ x_1-x_2+y_1-y_2 & x_1+x_2-y_1-y_2\\ \end{array}\right) ,
$$
$$B_n=\left( \begin{array}{cc} x_3+x_4 +y_3+y_4 & x_3-x_4+y_3-y_4 \\ x_3-x_4+y_3-y_4 & x_3+x_4 -y_3 -y_4 \\ \end{array}\right)
$$
when $n$ is even, and
$$
A_n=\left(\begin{array}{cc} x_1+x_2+y_1+y_2 & x_1-x_2-y_1 +y_2 \\ x_1 -x_2 -y_1 +y_2 & x_1+x_2-y_1-y_2\\ \end{array}\right),
$$
$$
  B_n=\left(\begin{array}{cc} x_3+x_4+y_3+y_4 & x_3-x_4-y_3+y_4 \\ x_3-x_4-y_3+y_4 & x_3+x_4-y_3-y_4 \\ \end{array}\right)
$$
when $n$ is odd. 

All the requirements can be checked by direct calculation.

\end{construction}

\section{Constructing uncountably many totally reflexive modules}

In this section we show that, for graded Cohen-Macaulay  rings $(R, \m)$ that contain the complex numbers and  specialize to a ring with $\m^3=0$ when  modding out a linear regular sequence, if there are non-free totally reflexive modules, then there are uncountably many non-isomorphic indecomposable ones, and they can be constructed as cokernels of matrices with generic linear entries  Note that the existence of infinitely many such modules was known from \cite{CPST}; the uncountablility is an improvement of that statement. This improvement is relevant in view of the theory Dao and Takahashi of radius of a category, applied to the theory of totally reflexive modules over the rings under consideration (see Corollary \ref{en}).

\begin{theorem}\label{uncount}
Let $(R, \m)$ be a standard graded non-Gorenstein ring with $\m^3 =0$ and containing ${\mathbb C}$, the field of complex numbers. Assume that $R$ admits non-free totally reflexive modules. Then there are uncountably many non-isomorphic indecomposable totally reflexive modules.

More precisely, let $R=k[x_1, \dots, x_n]/I$ and let $b$ be the smallest number of generators of a non-free totally reflexive $R$-module. Think of the set of all $b\times b$ matrices with linear entries in $R$ as being parametrized by ${\mathbb C}^{nb^2}$ (each matrix corresponds to the vector which records the coefficients of the linear entries). Then are countably many Zariski open sets $\mathcal{U}_k$  in ${\mathbb  C}^{nb^2}$ such that if $A \in \cap_{k} \mathcal{U}_k$, then $\mathrm{coker}(A)$ is a totally reflexive $R$-module.
\end{theorem}

\begin{proof}
We explain how the first claim in the statement follows from the second. The assumption that $R$ admits a non-free totally reflexive module will imply that the Zariski open sets $\mathcal{U}_k$ are non-empty.
A countable union of proper Zariski closed sets in ${\mathbb C}^{nb^2}$ is a set of measure zero, and therefore its complement is uncountable. The modules we will construct in the proof of the second claim will be syzygies in a totally acyclic complex with constant betti numbers $b$. Thus they are indecomposable (since they have minimal number of generators). We claim that there are uncountably many choices of $A$ that give rise to mutually non-isomorphic cokernels. Let $A$, $A'$ be $b\times b$ matrices with linear entries. Then $\mathrm{coker}(A)\cong \mathrm{coker}(A')$ if and only if there exist invertible $b\times b$ matrices $U,  V$ such that $UA=A'V$. Let $(u)_{ij}, (v)_{ij}$ denote the degree zero components of $U, V$.  Let the $(i, j)$ entry of $A$ be $\sum_{k=1}^n a_{ij}^kx_k$ and the $(i, j)$ entry of $A'$ be $\sum_{k=1}^n a_{ij}^{'k}x_k$. Setting the linear components of the entries of $UA$ equal to those of the entries of $A'V$ and indentifying the coefficients of each $x_k$ gives rise to equations
$$
\sum_{l=1}^b u_{il}a_{lj}^k=\sum_{l=1}^b a_{il}^{'k}v_{lj}
$$
for every $i, j \in \{1, \ldots, b\}$ and every $k \in \{1, \ldots, n\}$. View $u_{ij}, v_{ij}$ as unknown; there are $nb^2$ equations and $2b^2$ unknowns. Since $U, V$ are invertible, this system must have nontrivial solutions. We have $n \ge 3$, since otherwise the Hilbert function of $R$ given by Theorem (\ref{Yoshino}) would force $R$ to be Gorenstein. Therefore, the minors of size $(2b^2+1)\times (2b^2+1)$ of the resulting matrix of coefficients must be zero. These minors are polynomials  in $a_{ij}^k, a_{ij}^{'k}$. Therefore,  if we fix a matrix $A$, then the set of all the matrices $A'$ that have $\mathrm{coker}(A') \cong \mathrm{coker}(A)$ belong to a Zariski closed set in ${\mathbb C}^{nb^2}$. If there were only countably many isomorphism classes of modules obtained as cokernels of matrices in $\cap _k \mathcal{U}_k$, it would follows that ${\mathbb C}^{nb^2}$ can be obtain as a union of countably many proper Zariski closed sets. This is a contradiction.

Now we prove the second claim.
A $b \times b$ matrix with entries in $R$ can be viewed as a $R$-module homomorphism $A: R^b \rightarrow R^b$. Since the entries of $A$ are linear, it also gives rise to a linear map of vector spaces which we denote $\tilde{A}:(R_1)^b \rightarrow (R_2)^b$. We have  $n=\mathrm{dim}_{\bf C} (R_1)$, and,  by Theorem (\ref{Yoshino}), $\mathrm{dim}_{\mathbb C}(R_2)=n-1$. The Zariski open set $\mathcal{U}_0$ is defined as the set of matrices $A$ such that $\tilde{A}: (R_1)^b \rightarrow (R_2)^b$ is surjective, and the columns of $A$ are linearly independent in $(R_1)^b$. We check that this is indeed a Zariski open set. The linear independence of the columns is clearly an open condition. 
We know from Theorem (\ref{Yoshino}) that we can write
$\displaystyle R=\frac{k[x_1, \ldots, x_n]}{(p_1, \ldots, p_r)}$, where $p_1, \ldots, p_r$ are polynomials of degree two.  Let $P_2$ denote the degree two component of the polynomial ring $P=k[x_1, \ldots, x_n]$ and let $E_1, \ldots, E_b$ denote the standard basis vectors in ${\mathbb C}^b$.

$\tilde{A}$ is surjective if and only if the vectors 
\begin{equation}\label{basis} \{x_i\mathrm{col}_j(\tilde{A}), \ p_lE_j \, | \,  i=1, \ldots, n, j=1, \ldots, b, l=1, \ldots, r\} \subseteq  P_2^b\end{equation}
span $P_2^b$. We identify each vector in $P_2^b$ with a vector in ${\mathbb C}^{Nb}$ (by choosing an ordering of the monomials in $P_2$ and  recording the coefficients of each component), where $\displaystyle N=\left(\begin{array}{c} n+1 \\ 2 \\ \end{array}\right)$. Note that $\mathrm{dim}_{\mathbb C}(R_2)=n-1=N-r$. Form a matrix with $bn+br=b(N+1)$ columns and $bN$ rows with entries in ${\mathbb C}$ by recording each vector in (\ref{basis}) as a vector in ${\mathbb C}^{Nb}$ via this identification.  Surjectivity of $\tilde{A}$ translates  into the condition that this matrix has maximal number of generators.  This is obviously an open condition in the coefficients of the entries of $A$.

The Hilbert function of $R$ shows that the surjectivity of $\tilde{A}$ is equivalent to $\mathrm{dim}_{\mathbb C}(\mathrm{ker}(\tilde{A}))=b$. Form a matrix $A_1$ by using a spanning set of $\mathrm{ker}(\tilde{A})$ in $R_1^b$ as columns. $A_1$ is a $b \times b$ matrix with entries consisting of linear forms in $R$.
The Zariski open  set $\mathcal{U}_1$ is defined as the set of matrices $A$ such that $A_1 \in \mathcal{U}_0$. In order to see that this is a Zariski open set, it is enough to check that the entries of $A_1$ are obtained as polynomials in the entries of $A$. This is obvious since the entries of $A_1$ are obtained by solving linear equations with coefficients obtained from the entries of $A$. The Zariski open sets $\mathcal{U}_k$ for $k \ge 2$ are defined recursively as the set of matrices $A$ such that $A_k \in \mathcal{U}_0$, where $A_k$ is defined recursively as the matrix whose columns are a spanning set for $\mathrm{ker}(\tilde{A}_{k-1})$. Also consider the Zariski open sets $\mathcal{U}_k'$ obtained from the transposes of these matrices: $A \in \mathcal{U}_k'\Leftrightarrow A_k^t \in \mathcal{U}_0$.

Now we construct matrices $A_k$ for $k \le -1$. Assume that  $A \in \mathcal{U}'_0$, so the transpose $\tilde{A^t}: (R_1)^b \rightarrow (R_2)^b$ is surjective, and let $A'_{-1}$ denote the $b \times b$ matrix with columns equal to a spanning set for $\mathrm{ker}(\tilde{A^t})$. The coefficients in the linear entries of $A'_{-1}$ can be obtained as polynomials in the entries of $A$. Let $A_{-1}:= (A'_{-1})^t$. Let $\mathcal{U}_{-1}$ denote the Zariski open set of matrices $A$ that yield $A_{-1} \in \mathcal{U}_0$ and let $\mathcal{U}'_{-1}$ denote the Zariski open set of matrices $A$ that yield $A'_{-1} \in \mathcal{U}_0$. The construction is continued recursively: in order to construct $A_{-k-1}$, assume that there are Zariski open sets $\mathcal{U}_{-k}$ and $\mathcal{U}_{-k}'$ such that if $A \in \mathcal{U}_{-k} \cup \mathcal{U}_{-k}'$, then we have $\tilde{A}_{-k}, \tilde{A}^t_{-k}:(R_1)^b \rightarrow (R_2)^b$ surjective. We let $A_{-k-1}'$ denote the matrix with columns obtained as a spanning set of $\mathrm{ker}(A_{-k})^t$, and $A_{-k-1}:=(A_{-k-1}')^t$.

We claim that if $A \in \cap _{k \in {\bf Z}} (\mathcal{U}_k\cap \mathcal{U}_k')$, then there is a doubly infinite complex  consisting of the free modules $R^b$ and differentials given by the matrices $A_k$ , and this complex is totally acyclic.  Let $k \ge 0$. 
We have $$\mathrm{ker}(A_k)=\mathrm{ker}(\tilde{A_k}) \cup (R_2)^b, $$
 and $$\mathrm{im}(A_{k+1})=R \ \mathrm{span\ of\ im}(\tilde{A}_{k+1})= R \ \mathrm{span\  of\ ker}(\tilde{A}_k).
$$
It follows that $A_kA_{k+1}=0$, and exactness follows since the surjectivity of $\tilde{A}_{k+1}$ implies that $(R_2)^b \subseteq \mathrm{im}(A_{k+1})$.

 Now let $j:=-k-1 \le -1$. We want to see that $A_jA_{j+1}=0$. This is equivalent to $A_{-k}^t A_{-k-1}^t=0$. By construction, $A_{-k-1}^t= A_{-k-1}'$, and we have $\mathrm{im}(A_{-k-1}')\subseteq \mathrm{ker}(A_{-k}^t)$, which gives us the desired conclusion. To prove exactness,  we note as before that $\mathrm{ker}(A_j)=\mathrm{ker}(\tilde{A_j}) \cup (R_2)^b$. The choice of $A$ guarantees that $\tilde{A}_{j+1}:(R_1)^n \rightarrow (R_2)^n$ is surjective, so $(R_2)^b \subseteq \mathrm{im}(A_{j+1})$.  Moreover, the subspace of $(R_1)^b$ spanned by the columns of $A_{j+1}$ contains $\mathrm{ker}(\tilde{A}_j)$, and they are both $b$-dimensional.

\end{proof}

\begin{corollar}\label{posdim}
Let $(R, \m)$ be a $d$ dimensional graded Cohen-Macaulay ring over the field of complex numbers ${\mathbb C}$. Assume that $\m^3 \subseteq (x_1, \ldots, x_d)$, where $x_1, \ldots, x_d$ is a linear system of parameters. If $R$ admits non-free totally reflexive modules, then there are uncountably  many mutually non-isomorphic indecomposable totally reflexive modules.
\end{corollar}
\begin{proof}
The case $d=0$ is the content of Theorem (\ref{uncount}). We will prove the case $d=1$. The general case will then follow by induction on $d$. Let $x$ denote a linear parameter and let $R'=R/(x)$. If $M$ is a non-free totally reflexive  $R$-module, then $M'=M/(x)M$ is a non-free totally reflexive $R/(x)$-module. Choose $M$ to have the smallest number of generators among non-free indecomposable totally reflexive $R$-modules. Then $M'$ will also have number of generators $b$. The proof of theorem (\ref{uncount}) shows that we can construct uncountably many mutually non-isomorphic totally reflexive modules with number of generators $b$.  We use the construction (\ref{construction}) from Section 2 to build totally reflexive $R$-modules from each such $R'$ module. We claim that there are uncountably many choices of $M'$ that give rise to mutually non-isomorphic $R$-modules via construction (\ref{construction}). Let $A$, $B$ denote two presentation matrices of non-isomorphic totally reflexive $R'$-modules with number of generators $b$. Assume that $A$ occurs as part of a totally acylic complex as the map $\delta_1: F_1 \rightarrow F_0$, and $B$ occurs as part of a totally acyclic complex as the map $\delta'_0: F'_0 \rightarrow F'_{-1}$. Let $\tilde{A}, \tilde{B}$ denote liftings of $A, B$ to $R$.

Construction (\ref{construction}) gives matrices
$$
E=\left[ \begin{array}{cc}\tilde{A} & -xI_b \\ -C & \tilde{\delta_0} \\ \end{array}\right] \ \ \ \ \ \ \mathrm{and} \ \ F=\left[\begin{array}{cc} \tilde{\delta_1'} & -xI_b \\ -D &\tilde{B} \\ \end{array}\right],
$$
each of which is part of a totally acyclic complex of $R$-modules. As in the proof of theorem (\ref{uncount}), if $\mathrm{coker}(E)\cong \mathrm{coker}(F)$, then there exist invertible matrices $U, V$ such that $EU=VF$. Representing $U, V$ in block form, we can write
$$
U=\left[ \begin{array}{cc} U_1 & U_2 \\ U_2 & U_4 \\ \end{array}\right], \ \ \ \ \ V = \left[\begin{array}{cc} V_1 & V_2 \\ V_3 & V_4 \\ \end{array}\right]
$$
and the requirement that $EU=VF$ implies that $xV_1+V_2\tilde{B}=\tilde{A}U_2+xU_4$. Modulo $x$, we have $\overline{V_2}B=A\overline{U_2}$. The first part of the proof of Theorem (\ref{uncount}) shows that if we choose $A, B$ sufficiently general, then we must have $\overline{\bf v}_2 =\overline{\bf u}_2=0$, where $\overline{\bf u}_2, \overline{\bf v}_2$ denote degree zero components of $U_2, V_2$. We also have $-CU_2+\tilde{\delta}_0U_4=-xV_3+V_4\tilde{B}$. Identifying the linear parts after modding out by $x$, we have $\delta_0  \overline{U}_4=\overline{V}_4B$. Since the coefficients of the linear entries entries of $\delta _0$ can be obtained  as polynomials in terms of the entries of $A$, we may also assume that $\delta _0$ and $B$ are sufficiently general so that this implies that $\overline{\bf u}_4=\overline{\bf v}_4=0$. The fact that $\overline{\bf u_2}=\overline{\bf u_4}=0$ contradicts the assumption that the matrix $U$ is invertible.

Note that the modules given by the construction (\ref{construction}) might not be indecomposable. Nevertheless, having uncountably many mutually non-isomorphic totally reflexive modules of number of generators $b$ implies that there must be uncountably many non-isomorphic indecomposable totally reflexive modules. Otherwise, countably many indecomposables can only give rise to countably many totally reflexive modules.
\end{proof}

\begin{corollar}\label{en}
Let $R$ be as in Corollary (\ref{posdim}). Then the category of totally reflexive $R$-modules is either trivial (consists of just the free modules), or else it has positive radius (see \cite{DT} for the definition of radius of a category).
\end{corollar}
\begin{proof}
The definition of radius of a category in (\cite{DT}, Definition 2.1) counts the minimal number of extensions needed in order to obtain all objects in the cateogory from a single object, via taking direct sums, direct summands, syzygies, and extensions.
The existence of uncountably many totally reflexive $R$-modules implies that they cannot be all obtained from a single object via direct sums, direct summands, and syzygies. Thus, at least one extension is necessary.
\end{proof}


\begin{thebibliography}{9999}
\bibitem[AB]{AB} M. Auslander and M. Bridger, {\em Stable module theory}, Memoirs of the American Mathematical Society, No. 94 American Mathematical Society, Providence, P. I. 1969.

\bibitem[AGP]{AGP} L. Avramov, V. Gasharov, and I. Peeva, {\em Complete intersection dimension}, Inst. Hautes Etudes Sci. Publ. Math. {\bf 86} (1997), 67--114.

\bibitem[AI]{AY} L. Avramov and S. Iyengar, {\em Constructing modules with prescribed cohomological support}, Illinois J. Math. {\bf 51} (2007), 1--20.

\bibitem[BH]{BH} W. Bruns and J. Herzog, {\em Cohen-Macaulay rings}, Cambridge Studies in Advanced Mathematics, {\bf 39}, Cambridge University Press, Cambridge, 1993.

\bibitem[CGT]{CGT} O. Celikbas, M. Gheibi, and R. Takahashi, {\em Brauer-Thrall for totally reflexive modules over local rings of higher dimension}, Algebr. Represent. Theory {\bf 17} (2014), no. 3, 997--1008.

\bibitem[CFH]{CFH} L. Christensen, H. Foxby and H. Holm, {\em Beyond totally reflexive modules and back, a survey on Gorenstein dimensions}, Commutative algebra ---Noetherian and non-Noetherian perspectives, 101--143, Springer, New York, 2011.


\bibitem[CJRSW]{CJRSW} L. Christenesen, D. Jorgensen, H. Rahmati, J. Striuli and R. Wiegand, {\em Brauer-Thrall for totally reflexive modules}, J. Algebra {\bf 350} (2012), 340--373.

\bibitem[CPST]{CPST} L. Christensen, G. Piepmeyer, J. Striuli, and R. Takahashi, {\em Finite Gorenstein representation type implies simple singularity}, Adv. Math. {\bf 218} (2008), no. 4, 1012--1026.

\bibitem[DT]{DT} H. Dao and R. Takahashi, {\em The radius of a subcategory of modules}, Algebra and Number Theory {\bf 8} (2014), no.1, 141--172.

\bibitem[Ho]{Ho} H. Holm, {\em Construction of totally reflexive modules from an exact pair of zero divisors}, Bull. London Math. Soc. {\bf 43} (2011), no. 2, 278--288.

\bibitem[HS]{HS} I. Henriques and L.{\c S}ega, {\em Free resolutions over short Gorenstein local rings}, Math. Z. {\bf 267} (2011), 645--663.

\bibitem[MN]{MN} J. Migliore and U. Nagel, {\em A tour of the Weak and Strong Lefshetz Properties}, J. Comm. Algebra {\bf 5} (3) (2013), 329--358. 

\bibitem[NS]{SS} S. Nasseh and S. Sather-Wagstaff, {\em Vanishing of Ext and Tor over fiber products}, preprint, arXiv: 1603.05711v3.

\bibitem[Ta]{Ta} R. Takahashi, {\em On G-regular local rings}, Comm. Algebra {\bf 36} (2008),  no. 12, 4472--4491.

\bibitem[Tr]{Tr} D. Rangel Tracy, {\em Filtrations of totally reflexive modules}, arxiv: 1410.0732

\bibitem[Yo]{Yoshino} Y. Yoshino, {\em Modules of G-dimension zero over local rings with the cube of maximal ideal being zero}, Commutative algebra, singularitites and computer algebra (Sinaia, 2002), 255--273, NATO Sci. Ser. II Math. Phys. Chem., {\bf 115}, Kluwer Acad. Publ., Dordrecht, 2003.


\end{thebibliography}
\end{document}